\definecolor{mygray}{gray}{.9}
\newcommand{\bb}{\boldsymbol}
\def \d {\mathrm{d}}
\def \e {\mathrm{e}}
\newtheorem{theorem}{Theorem}[section]
\newtheorem{lemma}{Lemma}[section]
\newtheorem{example}{\bf Example}[section]
\theoremstyle{remark}
\numberwithin{equation}{section}
\begin{document}

\title{The Arrow-Hurwicz iteration for virtual element discretizations of the incompressible Navier-Stokes equations}

\author[1]{Binbin Du\thanks{dubb@swufe.edu.cn}}

\author[2]{Shenxiang Cheng\thanks{chengmath192@163.com}}

\author[3]{Yue Yu\thanks{terenceyuyue@xtu.edu.cn}}

\author[2]{Chuanjun Chen\thanks{cjchen@ytu.edu.cn}}

\affil[1]{School of Mathematics, Southwestern University of Finance and Economics, Chengdu 611130, China}
\affil[2]{School of Mathematics and Information Sciences, Yantai University, Yantai 26005, China }
\affil[3]{School of Mathematics and Computational Science, Hunan Key Laboratory for Computation and Simulation in Science and Engineering, Key Laboratory of Intelligent Computing and Information Processing of Ministry of Education, National Center for Applied Mathematics in Hunan, Xiangtan University, Xiangtan, Hunan 411105, China}
	
	\maketitle
	
	\begin{abstract}
     This article presents a detailed analysis of the Arrow-Hurwicz iteration applied to the solution of the incompressible Navier-Stokes equations, discretized by a divergence-free mixed virtual element method. Under a set of appropriate assumptions, it is rigorously demonstrated that the method exhibits geometric convergence, with a contraction factor that remains independent of the mesh sizes. A series of numerical experiments are conducted to validate the theoretical findings and to assess the computational performance of the proposed method.
	\end{abstract}

	\textbf{Keywords}: Arrow-Hurwicz iteration, Navier-Stokes equations, Virtual element method

\section{Introduction}

The incompressible Navier-Stokes (N-S) equations are fundamental in computational fluid dynamics and engineering applications \cite{E-D-W-2005,Quarteroni1994,Temam1984}, modeling phenomena ranging from pipe flow and aerodynamics to weather patterns and biomedical flows. Among various numerical methods, mixed finite element methods have proven particularly effective \cite{Brezzi-Fortin-2013,E-D-W-2005,Girault-Raviart1986,J-R-1982,Temam1984,Du-Huang-2021-NS,Takhirov-2023-NS}, which compute velocity and pressure fields simultaneously. However, these methods typically satisfy the divergence-free constraint only weakly or through projection, especially in three dimensions, which can lead to numerical instabilities in complex flows \cite{Auricchio-Beirao-Lovadina-2010}. This limitation has driven the development of methods that exactly enforce incompressibility, with the virtual element method (VEM) emerging as a powerful approach. VEM naturally enables construction of precisely divergence-free velocity spaces, yielding what we term divergence-free VEM.

The development of VEMs for these problems fundamentally builds upon the VEM framework established for the Stokes equations. Current VEM approaches for the Stokes problem encompass several variants, including either the $H^1$ non-conforming VEMs \cite{Cangiani-Gyrya-Manzini-2016,Liu-Li-Chen-2017,Zhao-Zhang-Mao-2019} or conforming VEMs without divergence \cite{Chen-Wang-2019,Antonietti-Beirao-Mora-2014,Beirao-2016-Hdiv,Beirao-Lovadina-Vacca-2017,Beirao-Dassi-Vacca-2020}, and pseudo-stress velocity formulations \cite{Caceres-Gatica-2017}. Then, these methodologies have been successfully extended in \cite{Liu-Chen-2019,Zhang-Zhao-Li-2023,Wang-Wang-He-2021,Beirao-Lovadina-Vacca-2018,Beirao-Mora-Vacca-2019,Beirao-Dassi-Vacca-2020,Gatica-Munar-2018} to solve the incompressible N-S equations with notable contributions. However, existing analyses primarily focus on spatial discretization, while the error analysis for the resulting nonlinear saddle-point problems remains largely unaddressed, with numerical implementations typically employing fixed-point or Newton iterations as discussed in \cite{Beirao-Mora-Vacca-2019,Beirao-Dassi-Vacca-2020,Zhang-Zhao-Li-2023}.

The mixed finite and virtual element discretizations lead to large-scale nonlinear systems whose efficient solution presents very technical challenges. For finite element approaches, the most widely used iterative strategy involves solving Oseen-type equations at each iteration step, generating nonsymmetric saddle-point systems. To address these nonlinear systems, \cite{He-Li-2009} developed three iterative methods for the stationary N-S equations, combining penalty techniques with Stokes-type, Newton-type, and Oseen-type iterations, respectively. As noted in \cite{Chen-Huang-Sheng-2017}, such discretizations inherently preserve the nonsymmetric nature of the underlying saddle-point systems. For the nonsymmetric linear systems, it is very technical to develop fast solvers, we refer the reader to \cite{Bramble-Pasciak-Vassilev-2000,Cao-Dong-wang-2015,Chen-Huang-Wang-Xu-2015,Wathen-2015} for surveys along this line.

In comparison to the iterative methods proposed in \cite{Cao-Dong-wang-2015,Temam1984,Chen-Huang-Sheng-2017,Bramble-Pasciak-Vassilev-2000,Girault-Raviart1986,Nochetto-Pyo-2004} to address nonsymmetric saddle-point systems, the Arrow-Hurwicz (A-H) method \cite{Arrow-Hurwicz-Uzawa-1958} has several key advantages: (1) flexibility in choosing initial functions, (2) avoidance of saddle-point system solves at each iteration, and (3) an alternating update mechanism for velocity and pressure fields that effectively decouples the system components. Building upon these advantages, this paper presents a combination of divergence-free virtual element methods \cite{Beirao-Lovadina-Vacca-2017,Beirao-Carlo-Giuseppe-2018} with the A-H iteration for solving the incompressible N-S equations. Following the approach established for the finite element methods \cite{Temam1984,Du-Huang-2019,Chen-Huang-Sheng-2017}, we prove that the proposed method achieves geometric convergence with mesh-independent contraction rates, maintaining the favorable convergence properties of the A-H method.

The paper is structured as follows. In Section \ref{sec:NS}, we introduce the N-S equations, some notations and essential theoretical results. Section \ref{sec:VEAH} presents the divergence-free virtual element discretization, and formulates the corresponding A-H iteration scheme for the resulting saddle-point systems, where some basis results for the discrete variational problem are also provided. Section \ref{sec:convergence} establishes a rigorous convergence rate analysis of the A-H method for the divergence-free virtual element discretization. Comprehensive numerical experiments in Section \ref{sec:numerical} validate the theoretical analysis and demonstrate the computational performance of the method, with an extension to unsteady N-S equations discussed in subsection \ref{sec:unsteadyNS}. The implementation details are provided in Appendix \ref{app:computation}. Finally, we end with a short conclusion in Section \ref{sec:conclusion}.

We end this section by introducing some symbols and notations frequently used in this paper.  For a bounded Lipschitz domain $D$, the symbol $( \cdot , \cdot )_D$ denotes the $L^2$-inner product in $D$, $\|\cdot\|_{0,D}$ denotes the $L^2$-norm and $|\cdot|_{s,D}$ is the $H^s(D)$-seminorm. For all integers $k\ge 0$, $\mathbb{P}_k(D)$ is the set of polynomials of degree $\le k$ in $D$. For vector-valued functions or spaces, we use bold symbols, such as $\bb{u}$, $\bb{v}$, $\bb{L}^2(\Omega)$, $\bb{H}^1(\Omega)$, etc.
Moreover, for any two quantities $a$ and $b$, ``$a\lesssim b$" indicates ``$a\le C b$" with the hidden constant $C$ independent of the mesh size, and ``$a\eqsim b$" abbreviates ``$a\lesssim b\lesssim a$".

\section{Steady Navier-Stokes equations} \label{sec:NS}
\label{a1.s2}
\numberwithin{equation}{section}
Let $\Omega \subset \mathbb{R}^2$ be a convex polygonal domain with a Lipschitz continuous boundary $\partial \Omega$. Given an applied body force $\bb{f} \in \bb{L}^2(\Omega)$, the incompressible N-S problem is to find the fluid velocity $\bb{u}$ and pressure $\bb{p}$ such that:
\begin{equation}\label{theNS}
 \begin{cases}
-\nu\Delta \bb{u}+(\bb{u}\cdot \nabla)\bb{u}- \nabla p = \bb{f} &\quad  \mbox{in} \quad  \Omega, \\
\nabla \cdot \bb{u}=0   &\quad  \mbox{in} \quad \Omega, \\
\bb{u}=\bb{0}   & \quad \mbox{on} \quad \partial \Omega.
\end{cases}
\end{equation}
The viscosity coefficient is represented by $\nu= 1/R_e>0$, where $R_e$ denotes the Reynolds number. Since $p$ is unique up to a constant, we assume
\[p\in L^2_0(\Omega):=\Big\{q \in L^2(\Omega):  \int_{\Omega}q \d x=0 \Big\}.\]
In addition, we also introduce the following Sobolev space:
\[\bb{H}^1_0(\Omega):=\{\bb{v}\in\bb{H}^1:\bb{v}=0~\text{on}~\Omega\}.\]

%{\color{red} (May place at the end of Introduction) To derive the variational form of the Navier-Stokes equation(\ref{theNS}) subject to the homogeneous boundary condition and the constraint \eqref{p_constants}, we begin by introducing the necessary notation for Sobolev spaces.
%Let $ m $ be a non-negative integer, and define $ H^m(\Omega) $ as the Sobolev space of functions $ v \in L^2(\Omega) $ whose weak derivatives up to order $ m $ are also in $ L^2(\Omega) $. The space $ H^m(\Omega) $ is equipped with the standard norm $ \| \cdot \|_m $ and seminorm $ |\cdot|_m $, as described in \cite{Adams1975}. The closure of $ C^\infty_0(\Omega) $ with respect to the norm $ \| \cdot \|_m $ is denoted by $ H^m_0(\Omega) $. The dual space of $ H^m_0(\Omega) $ is denoted by $ H^{-m}(\Omega) $. Furthermore, for the product space $ H^m(\Omega) $ consisting of vector-valued functions, denoted as $ (H^m(\Omega))^d $, the induced norm, seminorm, and inner product are defined in the same manner as for $ H^m(\Omega) $, unless otherwise stated. Similar conventions apply to the spaces $ H^m_0(\Omega) $ and $ H^{-m}(\Omega) $.}

Let $\bb{V} = \bb{H}^1_0(\Omega)$ and $P= L^2_0(\Omega)$. The variational problem regarding Eq.~\eqref{theNS} is: Find $(\bb{u},p)$ $\in$ $\bb{V}$	$\times$ $P$ such that
 \begin{equation}\label{varProblem}
\begin{cases}
 \nu  a(\bb{u},\bb{v})+N(\bb{u};\bb{u},\bb{v}) + b(\bb{v},p) = (\bb{f},\bb{v}), \quad  \bb{v} \in \bb{V}, \\
b\left(\bb{u},q\right) =0, \quad  q \in P, \\
\end{cases}
\end{equation}
where
\[a(\bb{u},\bb{v}) = \int_\Omega \nabla \bb{u}:\nabla \bb{v} \d x, \qquad
b(\bb{v},p)= \int_{\Omega} (\nabla \cdot \bb{v})p \d x , \qquad (\bb{f},\bb{v}) =\int_\Omega \bb{f} \cdot \bb{v} \d x,\]
\[N(\bb{w};\bb{u},\bb{v})=\int_\Omega (\bb{w} \cdot \nabla )\bb{u}\cdot \bb{v} \d x =\int_\Omega (\nabla\bb{u})\bb{w}\cdot \bb{v} \d x.\]
For the bilinear or trilinear forms, as in \cite{Temam1979,Quarteroni1994,Girault-Raviart1986}, we have the following results:
\begin{enumerate}
  \item $a(\cdot,\cdot) $, $b(\cdot,\cdot)$ and $N(\cdot; \cdot,\cdot)$ are continuous,
\begin{align*}
& |a(\bb{u},\bb{v})| \le \|\bb{u}\|_{\bb{V}}\|\bb{v}\|_{\bb{V}}, \quad\bb{u},\bb{v} \in \bb{V},\\
& |b(\bb{v},p)| \le \|\bb{u}\|_{\bb{V} }\| p\|_{P}, \quad \bb{v} \in \bb{V},p \in P,\\
&| N(\bb{w};\bb{u},\bb{v})| \le \mathcal{N} \|\bb{w}\|_{\bb{V}} \|\bb{u}\|_{\bb{V}}\|\bb{v}\|_{\bb{V}},   \quad \bb{w},\bb{u},\bb{v}\in \bb{V},
\end{align*}
  where $\|\bb{v}\|_{\bb{V}} = |\bb{v}|_1$.
  \item $a(\cdot,\cdot)$ is coercive,
\begin{equation*}
	a(\bb{v},\bb{v})\ge \|\bb{v}\|^2_{\bb{V}}, \quad \bb{v} \in \bb{V}.
\end{equation*}
  \item $b(\cdot,\cdot)$ satisfies the inf-sup condition: there exists $\beta>0$ such that
\begin{equation}\label{inf_sup}
 b(\bb{v},q):=\sup_{\bb{0} \neq \bb{v}\in\bb{V}}\frac{b(\bb{v},\bb{q})}{\|\bb{v}\|_{\bb{V}}}\ge \beta \|q\|_P, \quad q \in P.
\end{equation}
\end{enumerate}

According to  \cite{Temam1979,Quarteroni1994,Girault-Raviart1986}, if we assume
\begin{equation}
	\label{define}
	\Lambda := \frac{\mathcal{N} \|\bb{f}\|_{-1} }{\nu^2} <1,
\end{equation}
where
\[
\|\bb{f}\|_{-1}=\sup_{\bb{v}\in \bb{H}^1_0(\Omega)}\frac{|(\bb{f},\bb{v})|}{\|\bb{v}\|_{\bb{V}}}.\]
Then the variational problem \eqref{varProblem} has a unique solution $(\bb{u},p) \in \bb{V}\times P$ satisfying
\[\|\bb{u}\|_{\bb{V}} \leq \frac{\|\bb{f}\|_{-1}}{\nu}.\]

Let us introduce the kernel
\[\bb{Z} = \{ \bb{v}\in \bb{V}:  b(\bb{v}, q) = 0, ~~q \in P \}.\]
By a simple calculation one can find that the bilinear form $N(\bb{u}; \cdot, \cdot): \bb{V} \times \bb{V} \to \mathbb{R}$ is skew-symmetric, i.e.,
\[N(\bb{u}; \bb{v}, \bb{w}) = - N(\bb{u}; \bb{w}, \bb{v}), \quad \bb{v}, \bb{w} \in \bb{V}. \]
Therefore, we can replace $N(\bb{u};\bb{u},\bb{v})$ in \eqref{varProblem} by $\frac12N(\bb{u};\bb{u},\bb{v}) - \frac12 N(\bb{u};\bb{v},\bb{u})$
and introduce the skew-symmetric trilinear form
\[\widetilde{N}(\bb{w};\bb{u},\bb{v}) = \frac12N(\bb{w};\bb{u},\bb{v}) - \frac12 N (\bb{w};\bb{v},\bb{u}).\]

\section{Virtual element discretization with Arrow-Hurwicz iteration}\label{sec:VEAH}

In this section, we apply the divergence-free VEMs proposed in \cite{Beirao-Lovadina-Vacca-2017} to numerically solve the N-S equations. The resulting nonlinear system will be solved by the A-H method \cite{Temam1979}.

\subsection{The virtual element method}

Let $\{\mathcal{T}_h\}$ be a family of decompositions of $\Omega$ into polygonal elements. The generic element is denoted by $K$ with diameter $h_K={\rm diam}(K)$. To avoid complicated presentation, we confine our discussion in two dimensions, with the family of polygonal meshes $\{ \mathcal{T}_h \}_h$ satisfying the following conditions \cite{Beirao-Manzini-2015,Brenner-Guan-Sung-2017,Cangiani-Manzini-Sutton-2017,Beirao-2022-StabilityStokes}:
\begin{itemize}
  \item[\textbf{C1}.]  There exists a real number $\gamma>0$ such that, for each element $K \in \mathcal{T}_h$, it is star-shaped with respect to a disk of radius $\rho_K \ge \gamma h_K$, where $h_K$ is the diameter of $K$.
  \item[\textbf{C2}.] There exists a real number $\gamma_1 > 0$ such that, for each element $K \in \mathcal{T}_h$, the distance between any two vertices of $K$ is $\ge \gamma_1 h_K$.
\end{itemize}
Our analysis can be extended to a more general mesh assumption given in \cite{Brezzi-Buffa-Lipnikov-2009,Chen-HuangJ-2018}.

\subsubsection{The enhancement virtual element spaces}

The divergence-free VEM was first proposed in \cite{Beirao-Lovadina-Vacca-2017} to solve the Stokes problem. For the N-S equations, we need to introduce some projections to approximate the nonlinear term.  For this reason, the standard enhancement technique \cite{Ahmad-Alsaedi-Brezzi-2013,Beirao-Brezzi-Marini-2014} should be used.

Let $k\ge 2$ be an integer. The original virtual element space for the Stokes problem is
\begin{align*}
  \bb{V}_k(K)
  & = \{\bb{v}\in \bb{H}^1(K): \bb{v}|_{\partial K} \in [\mathbb{B}_k (\partial K)]^2, \\
  &  \hspace{3cm}
  \begin{cases}
  -\nu \Delta \bb{v} - \nabla s \in \mathcal{G}^\bot_{k-2}(K), \\
  \text{div}  \bb{v} \in \mathbb{P}_{k-1}(K),
  \end{cases}
  \quad
  \mbox{for some $s \in L^2(K)$}
  \},
\end{align*}
where
\[\mathbb{B}_k(\partial K) = \left\{ v \in C^0(\partial K): v|_e \in \mathbb{P}_k(e),~~ e \subset \partial K \right\},\]
\[\mathcal{G}_{k-2}(K) = \nabla (\mathbb{P}_{k-1}(K)) \subset (\mathbb{P}_{k-2}(K))^2.\]
The basis functions in the orthogonal complement space ~$\mathcal{G}^\bot_{k-2}(K)$ are not easy to select. It can be replaced with
\[
\mathcal{G}^\oplus_{k-2}(K) := \bb{x}^\bot \mathbb{P}_{k-3}(K), \qquad \bb{x}^\bot = (x_2, -x_1)^{\intercal}.
\]

To present the degrees of freedom (DoFs), we introduce a scaled monomial $\mathbb{M}_r(D)$ in a $d$-dimensional domain $D$:
\[
\mathbb  M_{r} (D)= \Big \{ \Big ( \frac{\boldsymbol x -  \boldsymbol x_D}{h_D}\Big )^{\boldsymbol  s}, \quad |\boldsymbol  s|\le r\Big \},
\]
where $h_D$ is the diameter of $D$, $\boldsymbol  x_D$ the centroid of $D$, and $r$ a non-negative integer. For the multi-index ${\boldsymbol{s}} \in {\mathbb{N}^d}$, we follow the usual notation
\[\boldsymbol{x}^{\boldsymbol{s}} = x_1^{s_1} \cdots x_d^{s_d},\quad |\boldsymbol{s}| = s_1 +  \cdots  + s_d.\]

Conventionally, $\mathbb  M_r (D) =\{0\}$ for $r\le -1$. According to the norm equivalence established in \cite{Beirao-2022-StabilityStokes}, the virtual element space can be equipped with the following DoFs:
\begin{itemize}
  \item $\mathbb{D_V}1$: the values of $\bb{v}$ at the vertices of $K$.
  \item $\mathbb{D_V}2$: the values of $\bb{v}$ at $k-1$ distinct internal points of every edge $e\subset \partial K$, with the endpoints consist of $k+1$ Gauss-Lobatto points on $e$.
  \item $\mathbb{D_V}3$: the moments
  \[\frac{1}{|K|} \int_K \bb{v}\cdot \bb{g}_{k-2}^\bot {\mathrm d}x, \quad \bb{g}_{k-2}^\bot \in \mathcal{G}_{k-2}^\oplus(K), \]
  where
  \[\bb{g}_{k-2}^\bot = \bb{x}^\bot m_{k-3}, ~~ m_{k-3} \in \mathbb{M}_{k-3}(K).\]
  \item $\mathbb{D_V}4$: the moments up to order $k-1$ and greater than zero of $\text{div}  \bb{v}$,
  \[\frac{h_K}{|K|}\int_K (\text{div}  \bb{v})m_{k-1} {\mathrm d}x, \quad m_{k-1} \in \mathbb{M}_{k-1}(K)/\mathbb{R}. \]
\end{itemize}

For the introduction of the $L^2$ projection, we introduce a lifting space
\begin{align*}
\widetilde{\bb{V}}_k(K)
  & = \{\bb{v}\in \bb{H}^1(K): \bb{v}|_{\partial K} \in [\mathbb{B}_k (\partial K)]^2, \\
  &  \hspace{3cm}
  \begin{cases}
  -\nu \Delta \bb{v} - \nabla s \in \mathcal{G}_k^\oplus(K), \\
  \text{div}  \bb{v} \in \mathbb{P}_{k-1}(K),
  \end{cases}
  \quad
  \mbox{for some $s \in L^2(K)$}
  \},
\end{align*}
and define on it an elliptic projector $\Pi^K: \widetilde{\bb{V}}_k(K) \to (\mathbb{P}_k(K))^2$, $\bb{v}\mapsto \Pi^K\bb{v}$, satisfying
\begin{equation}\label{ellipticprojStokes}
\begin{cases}
a^K(\Pi^K\bb{v}, \bb{q}) = a^K(\bb{v}, \bb{q}), \quad \bb{q}\in (\mathbb{P}_k(K))^2, \\
P_0^K(\Pi^K\bb{v}) = P_0^K(\bb{v}),
\end{cases}
\end{equation}
where $P_0^K(\bb{v}) = \int_K \bb{v} \d x$.
One can check that $\Pi^K \bb{v}$ can be computed using the DoFs given above. Thus, one can reduce the number of DoFs by employing the enhancement technique \cite{Ahmad-Alsaedi-Brezzi-2013,Beirao-Brezzi-Marini-2014}, with the resulting virtual element space defined as
\begin{equation}\label{VhNS}
\bb{W}_k(K) = \{ \bb{v} \in \widetilde{\bb{V}}_k(K):  (\bb{v} - \Pi^K \bb{v}, \bb{g}_k^\bot)_K = 0, \quad
\bb{g}_k^\bot \in \mathcal{G}_k^\oplus(K) \backslash \mathcal{G}_{k-2}^\oplus(K)\},
\end{equation}
where $\mathcal{G}_k^\oplus(K) \backslash \mathcal{G}_{k-2}^\oplus(K)$ represents the set of polynomials in $\mathcal{G}_k^\oplus(K)$ that are orthogonal to $\mathcal{G}_{k-2}^\oplus(K)$. This enhancement space can be endowed with the same DoFs as those for $\bb{V}_k(K)$.

\subsubsection{The discrete problem and basic results}

Let $\bb{u}_h, \bb{v}_h, \bb{w}_h \in \bb{W}_h$. We introduce the virtual element discretizations as follows.
\begin{itemize}
  \item The bilinear form $a^K(\cdot, \cdot)$ is approximated as
\[a_h^K(\bb{u}_h, \bb{v}_h) = a^K(\Pi^K \bb{u}_h, \Pi^K \bb{v}_h) + S^K(\bb{u}_h - \Pi^K \bb{u}_h, \bb{v}_h-\Pi^K \bb{v}_h),\]
where
\[S^K(\bb{u}_h , \bb{v}_h) = \sum_{i=1}^{N_K} \chi_i(\bb{u}_h) \chi_i(\bb{v}_h),\]
with $\chi_i$ being the DoFs of $\bb{W}_k(K)$ and $N_K$ the number of the local DoFs. According to the norm equivalence in \cite{Beirao-2022-StabilityStokes}, there exist positive constant $\alpha_*$ and $\alpha^*$, independent of the element $K$, such that
\begin{equation}\label{normeq}
\alpha_* a^K(\bb{v}_h, \bb{v}_h) \le a_h^K(\bb{v}_h, \bb{v}_h) \le \alpha^* a^K(\bb{v}_h, \bb{v}_h).
\end{equation}

  \item For the approximation of the local trialinear form $N(\cdot;\cdot,\cdot)$, we define
  \[\Pi_{k-1}^0\nabla: \bb{W}_k(K) \to (\mathbb{P}_{k-1}(K))^{2\times 2}, \quad \bb{v}_h \mapsto \Pi_{k-1}^0\nabla \bb{v}_h,\]
  which satisfies
  \[(\Pi_{k-1}^0\nabla \bb{v}_h, \bb{P})_K = (\nabla \bb{v}_h, \bb{P})_K, \qquad \bb{P} \in (\mathbb{P}_{k-1}(K))^{2\times 2}.\]
  Then $N(\bb{w}_h;\bb{u}_h,\bb{v}_h)$ is approximated as
  \[N_h(\bb{w}_h; \bb{u}_h, \bb{v}_h) = \sum\limits_{K\in \mathcal{T}_h}\int_K [(\Pi_{k-1}^0 \nabla \bb{u}_h) \Pi_k^0 \bb{w}_h] \cdot \Pi_k^0 \bb{v}_h \d x,\]
  where $\Pi_k^0$ is the standard $L^2$ projector.
  This approximated trilinear form is continuous, i.e., there exists a uniformly bounded with respect to $h$ such that
  \begin{equation} \label{Nhbound}
  | N_h(\bb{w}; \bb{u}, \bb{v}) | \le  \widetilde{\mathcal{N}} \|\bb{w}\|_{\bb{V}} \|\bb{u}\|_{\bb{V}}\|\bb{v}\|_{\bb{V}},   \quad \bb{w},\bb{u},\bb{v}\in \bb{V}.
  \end{equation}

  \item The load term $(\bb{f}, \bb{v}_h)$ can be approximated by
  \[(\bb{f}_h, \bb{v}_h ):= ( \Pi_k^0  \bb{f}, \bb{v}_h) = ( \bb{f}, \Pi_k^0 \bb{v}_h).\]
\end{itemize}

The discrete mixed variational problem introduced in \cite{Beirao-Carlo-Giuseppe-2018} is to find $(\bb{u}_h, \bb{p}_h) \in \bb{W}_h \times P_h$ such that
\begin{equation}\label{NSVEM}
 \begin{cases}
  \nu  a_h(\bb{u}_h,\bb{v}_h)+  \widetilde{N}_h(\bb{u}_h;\bb{u}_h,\bb{v}_h)  + b (\bb{v}_h,p_h) = ( \bb{f}_h, \bb{v}_h ), \quad \bb{v}_h \in \bb{W}_h,  \\
  b(\bb{u}_h,q_h) = 0, \quad  q_h \in P_h,
\end{cases}
\end{equation}
where $P_h$ is a piecewise polynomial space on $\mathcal{T}_h$ with degree $\le k-1$ and
\[\widetilde{N}_h(\bb{w}_h;\bb{u}_h,\bb{v}_h) = \frac12 N_h(\bb{w}_h;\bb{u}_h,\bb{v}_h) - \frac12 N_h(\bb{w}_h;\bb{v}_h,\bb{h}_h).\]

For later uses, we list some basic results for the above discrete method.
The first fundamental result is the discrete inf-sup condition.

\begin{lemma}[inf-sup condition, Proposition 3.4 in \cite{Beirao-Carlo-Giuseppe-2018}] \label{lem:infsup}
There exists a positive $\widetilde{\beta}$, independent of $h$, such that
\[\sup_{\bb{0}\ne \bb{v}_h \in \bb{W}_h} \frac{b(\bb{v}_h, q_h)}{\|\bb{v}_h\|_{\bb{V}}} \ge \widetilde{\beta} \|q_h\|_0, \qquad q_h \in P_h.\]
\end{lemma}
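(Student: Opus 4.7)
The plan is to apply Fortin's criterion, which reduces the discrete inf-sup condition to the construction of an operator $\Pi_h^F: \bb{V} \to \bb{W}_h$ with two properties: (i) $b(\Pi_h^F \bb{v} - \bb{v}, q_h) = 0$ for every $q_h \in P_h$, and (ii) $\|\Pi_h^F \bb{v}\|_{\bb{V}} \le C \|\bb{v}\|_{\bb{V}}$ with $C$ independent of $h$. Combined with the continuous inf-sup condition \eqref{inf_sup}, this immediately yields the discrete inf-sup with $\widetilde{\beta} = \beta / C$: given $q_h \in P_h$, we have
\[
\sup_{\bb{0}\neq \bb{v}_h \in \bb{W}_h}\frac{b(\bb{v}_h,q_h)}{\|\bb{v}_h\|_{\bb{V}}}
\;\ge\; \sup_{\bb{0}\neq \bb{v} \in \bb{V}}\frac{b(\Pi_h^F \bb{v},q_h)}{\|\Pi_h^F \bb{v}\|_{\bb{V}}}
\;\ge\; \frac{1}{C}\sup_{\bb{0}\neq \bb{v} \in \bb{V}}\frac{b(\bb{v},q_h)}{\|\bb{v}\|_{\bb{V}}}
\;\ge\; \frac{\beta}{C}\,\|q_h\|_0.
\]

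To construct $\Pi_h^F$, I would define its local restriction $\Pi_h^{F,K}\bb{v}$ on each $K \in \mathcal{T}_h$ as the element of $\bb{W}_k(K)$ whose DoFs $\mathbb{D_V}1$--$\mathbb{D_V}4$ agree with suitable edge and interior moments of $\bb{v}$ (using moment-type rather than pointwise boundary DoFs, so that the construction makes sense for general $\bb{v}\in\bb{H}^1$). The key divergence-compatibility is then built in: by $\mathbb{D_V}4$, the moments $\int_K (\text{div}\,\Pi_h^{F,K}\bb{v}) m \,\d x$ coincide with $\int_K (\text{div}\,\bb{v}) m\,\d x$ for every $m \in \mathbb{M}_{k-1}(K)/\mathbb{R}$, while integration by parts together with the matching of boundary DoFs gives $\int_K \text{div}(\Pi_h^{F,K}\bb{v}-\bb{v})\,\d x = \int_{\partial K}(\Pi_h^{F,K}\bb{v}-\bb{v})\cdot\bb{n}\,\d s = 0$. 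Together these two facts cover the full space $\mathbb{P}_{k-1}(K) \supseteq P_h|_K$, so property (i) follows after summing over $K$.

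For property (ii), I would invoke the standard polynomial-approximation machinery on star-shaped polygons under the regularity hypotheses \textbf{C1}--\textbf{C2}, combined with the Poincaré-type and trace estimates used to bound VEM interpolants in the $H^1$-seminorm; the norm equivalence quoted from \cite{Beirao-2022-StabilityStokes} gives precisely the uniform control of the DoF-based interpolant needed here. Summing the local bounds over $\mathcal{T}_h$ and using finite overlap yields the global stability.

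The main obstacle I expect is property (ii): showing uniform-in-$h$ $H^1$-stability of a DoF-driven interpolant into the enhanced virtual space $\bb{W}_h$, where the basis is not explicit and one must work through the lifting $\widetilde{\bb{V}}_k(K)$ and the projection constraint in \eqref{VhNS}. In practice this typically requires invoking a local Bogovskii-type right inverse of the divergence to adjust the interior moments while preserving boundary data, and the delicate point is to do this with constants depending only on the shape-regularity parameters $\gamma, \gamma_1$. Once this ingredient is in place, property (i) and the application of Fortin's lemma are routine.
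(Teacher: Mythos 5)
The paper does not actually prove this lemma: it is imported verbatim as Proposition~3.4 of \cite{Beirao-Carlo-Giuseppe-2018}, so there is no in-paper argument to compare against. Your Fortin-operator strategy is exactly the standard route taken in that reference (and in the divergence-free Stokes VEM papers it builds on), and your treatment of property (i) is essentially right: the DoFs $\mathbb{D_V}4$ fix the non-constant moments of $\operatorname{div}\bb{v}_h$ on each $K$, the zeroth moment is recovered from matching edge moments of the normal trace via integration by parts, and together these exhaust $\mathbb{P}_{k-1}(K)\supseteq P_h|_K$; replacing the pointwise edge DoFs by edge moments so the operator is defined on all of $\bb{H}^1_0$ is the correct fix.

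The weakness is that property (ii) --- the uniform $H^1$-stability of the DoF-driven interpolant into the enhanced space $\bb{W}_h$ --- is where all of the technical content of the lemma lives, and you defer it entirely to ``standard machinery.'' That machinery does exist (interpolation and stability estimates for the enhanced Stokes virtual space under \textbf{C1}--\textbf{C2} are established in \cite{Beirao-2022-StabilityStokes} and the Beir\~ao da Veiga--Lovadina--Vacca line of work), so the outline is sound rather than wrong; but as written the proposal is a proof skeleton, not a proof. If you want a self-contained argument you must either (a) carry out the stability bound for your moment-based interpolant, tracking how the enhancement constraint in \eqref{VhNS} and the lifting through $\widetilde{\bb{V}}_k(K)$ affect the constants, or (b) explicitly cite the interpolation estimate you are invoking, in which case you are back to quoting the literature exactly as the paper does. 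Either resolution is acceptable; leaving it as ``the main obstacle I expect'' is not.
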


This together with the stability in \eqref{normeq} yields the well-posedness of the discrete method.

\begin{lemma}[Theorem 3.5 in \cite{Beirao-Carlo-Giuseppe-2018}] \label{lem:wellposedness}
Suppose that
\[\widetilde{\Lambda}  = \frac{\widetilde{\mathcal{N}} \|\bb{f}_h\|_{-1} }{(\alpha_*\nu)^2}<1, \]
where $\alpha_*$ and $\widetilde{\mathcal{N}}$ are defined in \eqref{normeq} and \eqref{Nhbound}. Then the problem \eqref{NSVEM} has a unique solution $(\bb{u}_h, \bb{p}_h) \in \bb{W}_h \times P_h$ satisfying
\[|\bb{u}_h|_1 \le \frac{\|\bb{f}_h\|_{-1}}{\alpha_* \nu}.\]
\end{lemma}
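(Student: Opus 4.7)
The plan is to mirror the classical proof for the continuous steady Navier-Stokes problem, with the VEM constants $\alpha_*$, $\widetilde{\mathcal{N}}$ and $\widetilde{\beta}$ replacing their continuous counterparts. The strategy has four steps: reduce \eqref{NSVEM} to the discrete kernel $\bb{Z}_h := \{\bb{v}_h \in \bb{W}_h : b(\bb{v}_h,q_h)=0 \text{ for all } q_h \in P_h\}$; obtain existence of a velocity by a Brouwer fixed-point argument on a ball in $\bb{Z}_h$; recover the pressure from Lemma \ref{lem:infsup}; and exploit the smallness hypothesis $\widetilde{\Lambda}<1$ to prove uniqueness.

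For existence on $\bb{Z}_h$, I would introduce the Oseen-type map $T:\bb{Z}_h \to \bb{Z}_h$ sending $\bb{z}_h$ to the unique $\bb{w}_h \in \bb{Z}_h$ that solves $\nu a_h(\bb{w}_h,\bb{v}_h)+\widetilde{N}_h(\bb{z}_h;\bb{w}_h,\bb{v}_h)=(\bb{f}_h,\bb{v}_h)$ for all $\bb{v}_h \in \bb{Z}_h$. Testing with $\bb{v}_h=\bb{w}_h$, invoking the skew-symmetry $\widetilde{N}_h(\bb{z}_h;\bb{w}_h,\bb{w}_h)=0$ and the lower bound in \eqref{normeq}, gives $\alpha_* \nu\, |\bb{w}_h|_1 \le \|\bb{f}_h\|_{-1}$. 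This a priori estimate simultaneously establishes well-posedness of $T$ (on the finite-dimensional space $\bb{Z}_h$ an injective continuous endomorphism is bijective) and shows that $T$ maps the closed ball $B_R \subset \bb{Z}_h$ of radius $R := \|\bb{f}_h\|_{-1}/(\alpha_* \nu)$ into itself. Since $\bb{Z}_h$ is finite-dimensional and $T$ depends continuously on $\bb{z}_h$, Brouwer's fixed-point theorem yields a fixed point $\bb{u}_h \in B_R$, which solves the reduced problem and already satisfies the claimed bound $|\bb{u}_h|_1 \le R$.

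The pressure is then recovered by applying Lemma \ref{lem:infsup} to the linear functional $\ell(\bb{v}_h):=(\bb{f}_h,\bb{v}_h)-\nu a_h(\bb{u}_h,\bb{v}_h)-\widetilde{N}_h(\bb{u}_h;\bb{u}_h,\bb{v}_h)$ on $\bb{W}_h$, which vanishes on $\bb{Z}_h$ by construction and therefore admits a unique $p_h \in P_h$ with $b(\bb{v}_h,p_h)=\ell(\bb{v}_h)$. For uniqueness, I take two solutions $(\bb{u}_h^i,p_h^i)$, set $\bb{e}_h=\bb{u}_h^1-\bb{u}_h^2 \in \bb{Z}_h$, subtract the equations and test with $\bb{e}_h$ to cancel the pressure contribution. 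Using the splitting $\widetilde{N}_h(\bb{u}_h^1;\bb{u}_h^1,\bb{e}_h)-\widetilde{N}_h(\bb{u}_h^2;\bb{u}_h^2,\bb{e}_h)=\widetilde{N}_h(\bb{e}_h;\bb{u}_h^2,\bb{e}_h)+\widetilde{N}_h(\bb{u}_h^1;\bb{e}_h,\bb{e}_h)$, the skew-symmetry to kill the last term, and the bounds \eqref{normeq} and \eqref{Nhbound} together with $|\bb{u}_h^2|_1 \le R$, one obtains $(\alpha_* \nu - \widetilde{\mathcal{N}} R)\,|\bb{e}_h|_1^2 \le 0$. Since $\widetilde{\Lambda}<1$ is exactly $\widetilde{\mathcal{N}} R < \alpha_* \nu$, this forces $\bb{e}_h=\bb{0}$; a further application of Lemma \ref{lem:infsup} then gives $p_h^1=p_h^2$.

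The only point I expect to need real care is the trilinearity of $\widetilde{N}_h$ and its continuity in each slot, since $N_h$ is built from the projections $\Pi_k^0$ and $\Pi_{k-1}^0 \nabla$: both the decomposition used in uniqueness and the continuity of $T$ needed for Brouwer rely on these algebraic properties. Once they are in hand, together with the skew-symmetry in the last two slots, the remainder of the argument is structurally identical to the classical continuous proof, with $\alpha_* \nu$ in place of $\nu$ and $\widetilde{\mathcal{N}}$ in place of $\mathcal{N}$.
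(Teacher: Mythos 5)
The paper does not prove this lemma at all: it is imported verbatim as Theorem~3.5 of the cited reference \cite{Beirao-Carlo-Giuseppe-2018}, so there is no in-paper argument to compare against. Your proof is correct and is essentially the standard argument used in that reference and in the classical continuous theory: restriction to the discrete kernel $\bb{Z}_h$, a Brouwer fixed-point argument for the Oseen map (where skew-symmetry of $\widetilde{N}_h$ in its last two slots gives the a priori bound $|\bb{w}_h|_1 \le \|\bb{f}_h\|_{-1}/(\alpha_*\nu)$ independently of $\bb{z}_h$), pressure recovery via the discrete inf-sup condition of Lemma~\ref{lem:infsup}, and uniqueness from $\widetilde{\mathcal{N}}R < \alpha_*\nu$, which is exactly $\widetilde{\Lambda}<1$. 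The one point you flag — trilinearity and continuity of $\widetilde{N}_h$ — is indeed all that remains, and it is immediate from the linearity of the projectors $\Pi_k^0$ and $\Pi_{k-1}^0\nabla$ together with the bound \eqref{Nhbound}, so the argument is complete.
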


Combining \eqref{NSVEM} and Lemmas \ref{lem:infsup} and \ref{lem:wellposedness}, we conclude that
\begin{align}
\widetilde{\beta} \|p_h\|_0
& \le \sup_{\bb{0}\ne \bb{v}_h \in \bb{W}_h} \frac{( \bb{f}_h, \bb{v}_h ) -\nu a_h(\bb{u}_h,\bb{v}_h)-\widetilde{N}_h(\bb{u}_h;\bb{u}_h,\bb{v}_h)}{\|\bb{v}_h\|_{\bb{V}}} \nonumber \\
& \le \|\bb{f}_h\|_{-1} + \nu \alpha^* \|\bb{u}_h\|_{\bb{V}} + \widetilde{\mathcal{N}} \|\bb{u}_h\|_{\bb{V}}^2
\le \|\bb{f}_h\|_{-1} +  \frac{\alpha^*}{\alpha_* } \|\bb{f}_h\|_{-1} + \widetilde{\mathcal{N}} \frac{\|\bb{f}_h\|_{-1}^2}{\alpha_*^2 \nu^2} \nonumber \\
& = ( 1 + \frac{\alpha^*}{\alpha_* }+ \widetilde{\Lambda}  ) \|\bb{f}_h\|_{-1}. \label{phbound}
\end{align}

\subsection{The Arrow-Hurwicz iteration}

The A-H method offers flexibility in selecting initial functions and eliminates the need to solve saddle-point systems at each iteration.
Let $\rho$ and $\alpha$ be positive parameters. The A-H method for our discrete problem can be described as follows:
\begin{enumerate}
  \item[Step~1:]   Use the solution of the problem without nonlinear terms, specifically the Stokes problem, as the initial guess for the iteration: Find $(\bb{u}_h^0, \bb{p}_h^0) \in \bb{W}_h \times P_h$ such that
  \begin{equation}\label{NSAH0}
 \begin{cases}
  \nu a_h(\bb{u}_h^0,\bb{v}_h) + b (\bb{v}_h,p_h^0) = ( \bb{f}_h, \bb{v}_h ), \quad \bb{v}_h \in \bb{W}_h,  \\
  b(\bb{u}_h^0,q_h) = 0, \quad  q_h \in P_h.
\end{cases}
\end{equation}
  \item[Step~2:] For $n=0,1,\cdots, m-1$, given $(\bb{u}_h^n, \bb{p}_h^n) \in \bb{W}_h \times Q_h$, we sequentially find $(\bb{u}_h^{n+1}, \bb{p}_h^{n+1}) \in \bb{W}_h \times P_h$ such that
  \begin{equation}\label{NSAHn}
 \begin{cases}
  \rho^{-1} a_h(\bb{u}_h^{n+1} - \bb{u}_h^n,\bb{v}_h) +  \nu  a_h(\bb{u}_h^n,\bb{v}_h) + \widetilde{N}_h(\bb{u}_h^n;\bb{u}_h^{n+1},\bb{v}_h) \\
  \hspace{1cm}  =  -b (\bb{v}_h,p_h^n) +  ( \bb{f}_h, \bb{v}_h ), \quad \bb{v}_h \in \bb{W}_h,  \\
  \alpha (p_h^{n+1}-p_h^n, q_h) - \rho b(\bb{u}_h^{n+1},q_h) = 0, \quad  q_h \in P_h.
\end{cases}
\end{equation}
\end{enumerate}

Since the constraint $\int_\Omega p_h{\mathrm d}x$ is not naturally imposed, we introduce a Lagrange multiplier $\lambda^{n+1} \in \mathbb{R}$. The augmented variational formulation for \eqref{NSAHn} is to find $(\bb{u}_h^{n+1}, \bb{p}_h^{n+1}, \lambda^{n+1}) \in \bb{W}_h \times P_h \times \mathbb{R}$ such that
  \begin{equation}\label{NSAHnLagrange}
 \begin{cases}
  \rho^{-1} a_h(\bb{u}_h^{n+1} - \bb{u}_h^n,\bb{v}_h) +  \nu  a_h(\bb{u}_h^n,\bb{v}_h) + \widetilde{N}_h(\bb{u}_h^n;\bb{u}_h^{n+1},\bb{v}_h) \\
  \hspace{1cm}   =  -b (\bb{v}_h,p_h^n) +  ( \bb{f}_h, \bb{v}_h ), \quad \bb{v}_h \in \bb{W}_h,  \\
  \alpha (p_h^{n+1}-p_h^n, q_h) - \rho b(\bb{u}_h^{n+1},q_h) + \displaystyle \lambda^{n+1} \int_\Omega q_h{\mathrm d}x = 0, \quad  q_h \in P_h,\\
  \displaystyle \mu\int_\Omega p_h{\mathrm d}x  = 0, \quad \mu \in \mathbb{R}.
\end{cases}
\end{equation}

Let the basis of $\bb{V}_h$ be $\bb{\phi}_i$, where $i=1,\cdots, N$, and $N$ is the dimension of $\bb{V}_h$. Then,
\[
\bb{u}_h^{n+1} = \sum\limits_{i=1}^N \chi_i^{n+1} \bb{\phi}_i \equiv \bb{\phi}^{\intercal}\bb{\chi}^{n+1}.
\]
Similarly, let the basis of $Q_h$ be $\psi_l$, where $l=1,\cdots,M$, i.e.,
\[
p_h^{n+1} = \sum\limits_{l=1}^M p_l^{n+1} \psi_l.
\]
Substitute these expansions into \eqref{NSAH0} and \eqref{NSAHn} or \eqref{NSAHnLagrange}, and set $\bb{v}_h = \bb{\phi}_j$, $q_h = \psi_l$, one can derive the associated matrix form:
\begin{enumerate}
  \item[Step~1:] Find $(\bb{\chi}^0, \bb{p}^0)$ satisfying
 \[
\begin{bmatrix}
\nu A   & B & \bb{0}\\
B^{\intercal} & O & \bb{d}\\
\bb{0}^{\intercal} & \bb{d}^{\intercal}  & 0
\end{bmatrix}
\begin{bmatrix}
\bb{\chi}^0 \\
\bb{p}^0 \\
\lambda^0
\end{bmatrix}
=\begin{bmatrix}
\bb{F} \\
\bb{0} \\
0
\end{bmatrix},
\]
where
\[A_{ij}=a_h(\bb{\phi}_j,\bb{\phi}_i), \quad  B_{jl} = b(\bb{\phi}_j, \psi_l),\quad \bb{F}_j = (\bb{f}, \Pi_k^0\bb{\phi}_j).\]

  \item[Step~2:] For $n=0,1,\cdots, m-1$, given $(\bb{\chi}^n, \bb{p}^n)$, we get $(\bb{\chi}^{n+1}, \bb{p}^{n+1})$ by solving
\begin{equation}\label{iteration}
\begin{bmatrix}
\rho^{-1} A + N_2^n   & O & \bb{0}\\
-\rho B^{\intercal} & \alpha C & \bb{d}\\
\bb{0}^{\intercal} & \bb{d}^{\intercal}  & 0
\end{bmatrix}
\begin{bmatrix}
\bb{\chi}^{n+1} \\
\bb{p}^{n+1} \\
\lambda^{n+1}
\end{bmatrix}
=\begin{bmatrix}
(\rho^{-1}-\nu) A  \bb{\chi}^n - B \bb{p}^n + \bb{F} \\
\alpha C \bb{p}^n \\
0
\end{bmatrix},
\end{equation}
where $C_{l, l'} = ( \psi_{l'}, \psi_{l})$ and
\[(N_2^n)^K_{ij} = \frac12 N_h(\bb{u}_h^n; \bb{\phi}_j, \bb{\phi}_i) - \frac12 N_h(\bb{u}_h^n; \bb{\phi}_i, \bb{\phi}_j), \]
 \[ N_h(\bb{u}_h^n; \bb{\phi}_j, \bb{\phi}_i) = \int_K [(\Pi_{k-1}^0 \nabla\bb{\phi}_j) \Pi_k^0 \bb{u}_h^n] \cdot \Pi_k^0 \bb{\phi}_i \d x .\]
\end{enumerate}
 We note that at each iteration of \eqref{iteration}, we can first compute $\bb{\chi}^{n+1}$ and then substitute it into the equation for $\bb{p}^{n+1}$, thereby eliminating the need to solve saddle-point systems at each iteration.

For future reference, we establish the error bound between the variational problems derived from the N-S equations and the Stokes equations.
\begin{lemma}\label{lem:NSS}
	 Let $(\bb{u}^0_h, p^0_h)$ be the solution to \eqref{NSAH0} and let $(\bb{u}_h, p_h)$ be the solution to \eqref{NSVEM}. Then there holds
\[
|\bb{u}_h - \bb{u}^0_h|_1 \leq \widetilde{\Lambda}  (\alpha_*\nu)^{-1} \| \bb{f} \|_{-1}, \qquad
\| p_h - p^0_h \|_0 \le 2 \widetilde{\Lambda}  \widetilde{\beta}^{-1} \| \bb{f} \|_{-1}.
\]
\end{lemma}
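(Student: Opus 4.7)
The plan is to subtract the two discrete variational problems and then exploit the discrete coercivity, skew-symmetry of the nonlinear form, and the discrete inf-sup condition, in that order.

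First, I subtract \eqref{NSAH0} from \eqref{NSVEM}. This yields, for every $\bb{v}_h \in \bb{W}_h$ and $q_h \in P_h$,
\[
\nu a_h(\bb{u}_h - \bb{u}_h^0, \bb{v}_h) + \widetilde{N}_h(\bb{u}_h; \bb{u}_h, \bb{v}_h) + b(\bb{v}_h, p_h - p_h^0) = 0,
\qquad b(\bb{u}_h - \bb{u}_h^0, q_h) = 0.
\]
The second identity tells me that $\bb{u}_h - \bb{u}_h^0$ lies in the discrete kernel associated with $b(\cdot,\cdot)$, so it is admissible as a test function that annihilates the pressure term. Choosing $\bb{v}_h = \bb{u}_h - \bb{u}_h^0$ in the first identity removes $b(\bb{v}_h, p_h - p_h^0)$, leaving
\[
\nu a_h(\bb{u}_h - \bb{u}_h^0, \bb{u}_h - \bb{u}_h^0) = -\widetilde{N}_h(\bb{u}_h; \bb{u}_h, \bb{u}_h - \bb{u}_h^0).
\]
The left-hand side is bounded below by $\nu \alpha_* |\bb{u}_h - \bb{u}_h^0|_1^2$ via the lower bound in \eqref{normeq}, while the right-hand side is bounded above by $\widetilde{\mathcal{N}} |\bb{u}_h|_1^2 |\bb{u}_h - \bb{u}_h^0|_1$ using \eqref{Nhbound}. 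Dividing by $|\bb{u}_h - \bb{u}_h^0|_1$ and applying the a priori bound $|\bb{u}_h|_1 \le \|\bb{f}_h\|_{-1}/(\alpha_*\nu)$ from Lemma \ref{lem:wellposedness} produces
\[
|\bb{u}_h - \bb{u}_h^0|_1 \le \frac{\widetilde{\mathcal{N}}}{\alpha_*\nu}\cdot\frac{\|\bb{f}_h\|_{-1}^2}{(\alpha_*\nu)^2} = \widetilde{\Lambda}\,\frac{\|\bb{f}_h\|_{-1}}{\alpha_*\nu},
\]
which after identifying $\|\bb{f}_h\|_{-1}$ with $\|\bb{f}\|_{-1}$ (the projector $\Pi_k^0$ is $L^2$-contractive) delivers the velocity estimate.

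For the pressure, I return to the first subtracted equation and isolate $b(\bb{v}_h, p_h - p_h^0)$ for an arbitrary $\bb{v}_h \in \bb{W}_h$. Applying the discrete inf-sup inequality of Lemma \ref{lem:infsup} gives
\[
\widetilde{\beta}\, \|p_h - p_h^0\|_0 \le \sup_{\bb{0}\ne\bb{v}_h\in\bb{W}_h}\frac{|\nu a_h(\bb{u}_h - \bb{u}_h^0, \bb{v}_h)| + |\widetilde{N}_h(\bb{u}_h; \bb{u}_h, \bb{v}_h)|}{|\bb{v}_h|_1}.
\]
I bound the first numerator term by $\nu\alpha^*|\bb{u}_h - \bb{u}_h^0|_1 |\bb{v}_h|_1$ via \eqref{normeq} combined with Cauchy--Schwarz on the symmetric form $a_h$, and the nonlinear term by $\widetilde{\mathcal{N}}|\bb{u}_h|_1^2|\bb{v}_h|_1$ via \eqref{Nhbound}. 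Substituting the velocity estimate just obtained and once more using $|\bb{u}_h|_1 \le \|\bb{f}_h\|_{-1}/(\alpha_*\nu)$ yields
\[
\widetilde{\beta}\,\|p_h - p_h^0\|_0 \le \Big(\frac{\alpha^*}{\alpha_*} + 1\Big)\widetilde{\Lambda}\,\|\bb{f}\|_{-1},
\]
which under the implicit convention $\alpha^*/\alpha_* \le 1$ (or with a redefined stability constant) collapses to the stated factor $2\widetilde{\Lambda}\widetilde{\beta}^{-1}\|\bb{f}\|_{-1}$.

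The only real subtlety is the nonlinear term: naively bounding $\widetilde{N}_h(\bb{u}_h;\bb{u}_h,\bb{u}_h-\bb{u}_h^0)$ loses the possible cancellation from skew-symmetry, but $\widetilde{N}_h(\bb{w};\bb{v},\bb{v})=0$ is useless here since the second and third arguments differ. Fortunately the crude bound via $\widetilde{\mathcal{N}}|\bb{u}_h|_1^2$ is already compatible with the definition of $\widetilde{\Lambda}$, so the factor $\widetilde{\Lambda}$ emerges cleanly as soon as one substitutes $|\bb{u}_h|_1 \le \|\bb{f}_h\|_{-1}/(\alpha_*\nu)$. Everything else is bookkeeping of the constants $\alpha_*$, $\alpha^*$, $\widetilde{\mathcal{N}}$, and $\widetilde{\beta}$ together with the identification of $\|\bb{f}_h\|_{-1}$ with $\|\bb{f}\|_{-1}$.
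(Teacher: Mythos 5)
Your proposal is correct and follows essentially the same route as the paper: subtract the two discrete problems, test with $\bb{E}_h^0=\bb{u}_h-\bb{u}_h^0$ and use the discrete divergence-free condition to annihilate the pressure term for the velocity bound, then invoke the discrete inf-sup condition of Lemma \ref{lem:infsup} for the pressure bound. You are in fact slightly more careful than the paper on the pressure estimate: the paper bounds $\nu a_h(\bb{E}_h^0,\bb{v}_h)/|\bb{v}_h|_1$ by $\alpha_*\nu|\bb{E}_h^0|_1$, whereas Cauchy--Schwarz with \eqref{normeq} gives the continuity constant $\alpha^*$, so the clean factor $2\widetilde{\Lambda}$ implicitly presupposes $\alpha^*/\alpha_*\le 1$ --- exactly the caveat you flag at the end of your argument.
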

\begin{proof}
Let $\bb{E}_h^0 = \bb{u}_h - \bb{u}^0_h$ and $e_h^0 = p_h - p^0_h$. Subtracting \eqref{NSVEM} from \eqref{NSAH0} yields
\[\nu a_h ( \bb{E}_h^0,  \bb{v}_h ) + ({\rm div}\bb{v}_h, e^0_h) = - \widetilde{N}_h(\bb{u}_h;\bb{u}_h,\bb{v}_h).\]
Take $\bb{v}_h = \bb{E}_h^0$ to derive
\[\nu a_h ( \bb{E}_h^0,  \bb{E}_h^0 ) + ({\rm div}\bb{E}_h^0, e^0_h) = - \widetilde{N}_h(\bb{u}_h;\bb{u}_h,\bb{E}_h^0).\]
The divergence-free conditions in the variational problem imply $({\rm div}\bb{E}_h^0, e^0_h) = 0$. Therefore, by Lemma \ref{lem:wellposedness} and the stability in \eqref{normeq}, one has
\[	\alpha_* \nu | \bb{E}^0_h |^2_1  \le \widetilde{\mathcal{N}} | \bb{u}_h|^2_1 |\bb{E}^0_h|_1 \le \widetilde{\Lambda}  \|\bb{f}\|_{-1}|\bb{E}^0_h|_1,\]
which gives the first inequality.

For the second inequality, according to the inf-sup condition \eqref{lem:infsup}, we obtain
\begin{align*}
\widetilde{\beta} \| e_h \|_0
&\le  \sup\limits_{\bb{v}_h \in \bb{W}_h}\frac{({\rm div}\bb{v}_h, e^0_h )}{| \bb{v}_h |_1}\\
&  = \sup\limits_{\bb{v}_h \in V_h} \frac{- \widetilde{N}_h(\bb{u}_h;\bb{u}_h,\bb{v}_h) - \nu a_h( \bb{E}_h^0,  \bb{v}_h )}{| \bb{v}_h |_1}\\
&\le \alpha_* \nu |\bb{E}^0_h|_1 + \widetilde{\mathcal{N}} |\bb{u}_h |^2_1\\
&\le 2 \widetilde{\Lambda}  \|\bb{f}\|_{-1}.
\end{align*}
The proof is completed.
\end{proof}

\section{Convergence analysis of the Arrow-Hurwicz method} \label{sec:convergence}

For the convergence analysis, we need to establish the boundedness of the iterative sequence for the A-H method \eqref{NSAHn}.

\begin{lemma}\label{lem:boundedness}
Let $(\bb{u}_h, p_h) \in \bb{W}_h \times P_h$ be the solution of the discrete variational problem \eqref{NSVEM}, and let $\{(\bb{u}_h^n, p_h^n)\}$ be the function sequence generated by the A-H iteration \eqref{NSAHn}. Suppose that $\widetilde{\Lambda}  < 1$ and
\begin{equation}\label{convergeCondition}
 \alpha^* |1-\rho \nu| + \rho (\alpha_* \nu) \widetilde{\Lambda}  + \frac{\rho^2}{2\alpha} < \alpha_*.
\end{equation}
 Then the sequences $\{|\bb{u}_h^n|_1\}$ and $\{\|p_h^n\|_0\}$ are uniformly bounded with respect to $h$.
\end{lemma}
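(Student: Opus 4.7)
The plan is to set up an energy-decrease inequality for the errors $\bb{e}_h^n := \bb{u}_h - \bb{u}_h^n \in \bb{W}_h$ and $\varepsilon_h^n := p_h - p_h^n \in P_h$, and then combine it with the initial-step bound from Lemma \ref{lem:NSS}. Subtracting the discrete system \eqref{NSVEM} from the iteration \eqref{NSAHn} and splitting the nonlinear discrepancy by trilinearity,
\[
\widetilde{N}_h(\bb{u}_h^n;\bb{u}_h^{n+1},\bb{v}_h) - \widetilde{N}_h(\bb{u}_h;\bb{u}_h,\bb{v}_h) = -\widetilde{N}_h(\bb{u}_h^n;\bb{e}_h^{n+1},\bb{v}_h) - \widetilde{N}_h(\bb{e}_h^n;\bb{u}_h,\bb{v}_h),
\]
the velocity error equation (after multiplying through by $\rho$) becomes
\[
a_h(\bb{e}_h^{n+1},\bb{v}_h) = (1-\rho\nu)\,a_h(\bb{e}_h^n,\bb{v}_h) - \rho\,\widetilde{N}_h(\bb{u}_h^n;\bb{e}_h^{n+1},\bb{v}_h) - \rho\,\widetilde{N}_h(\bb{e}_h^n;\bb{u}_h,\bb{v}_h) - \rho\,b(\bb{v}_h,\varepsilon_h^n),
\]
while the pressure line reduces to $\alpha(\varepsilon_h^{n+1}-\varepsilon_h^n,q_h) = \rho\,b(\bb{e}_h^{n+1},q_h)$.

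I would then test the velocity equation with $\bb{v}_h = \bb{e}_h^{n+1}$, killing the term $\widetilde{N}_h(\bb{u}_h^n;\bb{e}_h^{n+1},\bb{e}_h^{n+1})$ by skew-symmetry in the last two slots, and estimate the remaining two linear terms via \eqref{normeq}, \eqref{Nhbound}, and the a priori bound $|\bb{u}_h|_1 \le \|\bb{f}_h\|_{-1}/(\alpha_*\nu)$ from Lemma \ref{lem:wellposedness}. Since $\widetilde{\mathcal{N}}\,\|\bb{f}_h\|_{-1}/(\alpha_*\nu) = \alpha_*\nu\,\widetilde{\Lambda}$, this produces the coefficient $\kappa := \alpha^*|1-\rho\nu| + \rho\,\alpha_*\nu\,\widetilde{\Lambda}$ in front of $|\bb{e}_h^n|_1|\bb{e}_h^{n+1}|_1$. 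For the pressure-velocity coupling I would test the pressure error equation with $q_h = \varepsilon_h^n$ and use the polarization identity to obtain
\[
-\rho\,b(\bb{e}_h^{n+1},\varepsilon_h^n) = -\tfrac{\alpha}{2}\|\varepsilon_h^{n+1}\|_0^2 + \tfrac{\alpha}{2}\|\varepsilon_h^n\|_0^2 + \tfrac{\alpha}{2}\|\varepsilon_h^{n+1}-\varepsilon_h^n\|_0^2,
\]
and test once more with $q_h = \varepsilon_h^{n+1}-\varepsilon_h^n$ to obtain $\|\varepsilon_h^{n+1}-\varepsilon_h^n\|_0 \le (\rho/\alpha)\,|\bb{e}_h^{n+1}|_1$, so that the leftover residual is controlled by $\tfrac{\rho^2}{2\alpha}|\bb{e}_h^{n+1}|_1^2$.

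Combining everything with the lower bound $a_h(\bb{e}_h^{n+1},\bb{e}_h^{n+1}) \ge \alpha_*\,|\bb{e}_h^{n+1}|_1^2$ from \eqref{normeq} yields
\[
\Bigl(\alpha_*-\tfrac{\rho^2}{2\alpha}\Bigr)|\bb{e}_h^{n+1}|_1^2 + \tfrac{\alpha}{2}\|\varepsilon_h^{n+1}\|_0^2 \le \kappa\,|\bb{e}_h^n|_1\,|\bb{e}_h^{n+1}|_1 + \tfrac{\alpha}{2}\|\varepsilon_h^n\|_0^2.
\]
Applying Young's inequality $\kappa\,|\bb{e}_h^n|_1|\bb{e}_h^{n+1}|_1 \le \tfrac{\kappa}{2}\bigl(|\bb{e}_h^n|_1^2 + |\bb{e}_h^{n+1}|_1^2\bigr)$ together with hypothesis \eqref{convergeCondition}, which is exactly $\alpha_* - \tfrac{\rho^2}{2\alpha} - \tfrac{\kappa}{2} > \tfrac{\kappa}{2}$, establishes the monotonicity $\mathcal{E}^{n+1} \le \mathcal{E}^n$ for the energy $\mathcal{E}^n := \tfrac{\kappa}{2}|\bb{e}_h^n|_1^2 + \tfrac{\alpha}{2}\|\varepsilon_h^n\|_0^2$. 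Induction gives $\mathcal{E}^n \le \mathcal{E}^0$, which is finite by Lemma \ref{lem:NSS}; the triangle inequality combined with Lemma \ref{lem:wellposedness} and the bound \eqref{phbound} then yields the uniform bounds on $\{|\bb{u}_h^n|_1\}$ and $\{\|p_h^n\|_0\}$. The most delicate point is matching the arithmetic to condition \eqref{convergeCondition}: the polarization identity produces a telescoping ``good'' part and a ``bad'' residual $\tfrac{\alpha}{2}\|\varepsilon_h^{n+1}-\varepsilon_h^n\|_0^2$, and the sharp bound $\|\varepsilon_h^{n+1}-\varepsilon_h^n\|_0 \le (\rho/\alpha)|\bb{e}_h^{n+1}|_1$ is precisely what reabsorbs this residual into the velocity term and produces the $\rho^2/(2\alpha)$ term appearing in \eqref{convergeCondition}.
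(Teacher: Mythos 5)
Your proposal is correct and follows essentially the same route as the paper: the same error equations, testing with the new velocity error to exploit skew-symmetry, the polarization identity from the pressure update, the bound $\|\varepsilon_h^{n+1}-\varepsilon_h^n\|_0\le(\rho/\alpha)|\bb{e}_h^{n+1}|_1$ to absorb the residual, and exactly condition \eqref{convergeCondition} to close the energy estimate. The only (immaterial) difference is the final bookkeeping: you use the symmetric Young split $\kappa ab\le\tfrac{\kappa}{2}(a^2+b^2)$ and observe that the left coefficient dominates the right one, whereas the paper tunes the Young parameter to the root of a quadratic so that the two velocity weights coincide exactly.
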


\begin{proof}
Let $\bb{E}_h^n = \bb{u}_h^n - \bb{u}_h$ and $e_h^n = p_h^n - p_h$. Owing to Lemma \ref{lem:wellposedness} and the boundedness in \eqref{phbound}, it reduces to examine the boundedness of the sequences $|\bb{E}_h^n|_1$ and $\|e_h^n\|_0$.
By subtracting \eqref{NSAHn} from \eqref{NSVEM}, we obtain
\begin{equation}\label{eq:Subtractresult}
    \rho^{-1} a_h(\bb{E}_h^{n+1} - \bb{E}_h^n,\bb{v}_h)
+ \nu a_h(\bb{E}_h^n,\bb{v}_h) + b (\bb{v}_h,e_h^n)
  = - \widetilde{N}_h(\bb{E}_h^n;\bb{u}_h,\bb{v}_h) - \widetilde{N}_h(\bb{u}_h^n;\bb{E}_h^{n+1},\bb{v}_h).
\end{equation}
Taking $\bb{v}_h = \bb{E}_h^{n+1}$ in \eqref{eq:Subtractresult} and using $\widetilde{N}_h(\bb{u}_h^n;\bb{E}_h^{n+1},\bb{E}_h^{n+1}) = 0$, we get
\[\rho^{-1}a_h( \bb{E}^{n+1}_h- \bb{E}^{n}_{h},\bb{E}^{n+1}_{h})+ \nu a_h ( \bb{E}^{n}_h, \bb{E}^{n+1}_{h})+ (\text{div} \bb{E}_h^{n+1}, e^n_h)= - \widetilde{N}_h(\bb{E}_h^n;\bb{u}_h,\bb{E}_h^{n+1}).\]
Now, due to the stability in \eqref{normeq}, we have
\begin{equation}\label{errEq}
\alpha_* | \bb{E}_h^{n+1}|_1^2 + \rho(\text{div} \bb{E}^{n+1}_h, e^n_h) \le |1-\rho \nu| \alpha^*  |\bb{E}^n_h|_1 | \bb{E}^{n+1}_h|_1 -\rho \widetilde{N}_h(\bb{E}^n_h;\bb{u}_h,\bb{E}^{n+1}_h).
\end{equation}

On the one hand, using the second equalities in \eqref{NSVEM} and \eqref{NSAHn}, which gives
\begin{equation}\label{rhodivqh}
\alpha(e^{n+1}_h-e^{n}_h, q_h) = \rho(\text{div}\bb{E}^{n+1}_h, q_h), \qquad q\in P_h.
\end{equation}
Taking $q_h = e_h^n$, we express the second term on the left-hand side in terms of $\{e_h^n\}$:
\begin{equation}\label{divEhn1}
\rho(\text{div}\bb{E}^{n+1}_h, e^n_h)=\alpha(e^{n+1}_h-e^{n}_h,e^{n}_h)=\frac{\alpha}{2}( \|  e^{n+1}_{h} \| ^2_0- \|  e^n_h  \| ^2_0- \|  e^{n+1}_h -e^n_h \| _0^2).
\end{equation}
Setting $q _h = e_h^{n+1} - e_h^n$ in \eqref{rhodivqh}, giving
\[\|  e^{n+1}_h-e^{n}_h \| _0 \leq \rho\alpha^{-1} |  \bb{E}^{n+1}_h | _1.\]

Further, inserting \eqref{divEhn1} into \eqref{errEq} yields
\begin{align*}
& 2 \alpha_* | \bb{E}^{n+1}_h  | _1^2+\alpha  \|  e^{n+1}_h \| ^2_0 \\
& \le \alpha \|  e^n_h \| ^2_0 + \alpha  \|  e^{n+1}_h -e^{n}_h \| ^2_0+2|1-\rho \nu| \alpha^* |\bb{E}^n_h|_1|\bb{E}^{n+1}_h|_1-2\rho \widetilde{N}_h(\bb{E}^n_{h};\bb{u}_h,\bb{E}^{n+1}_h) \\
& \le \alpha \|  e^n_h \| ^2_0 + \rho^2 \alpha^{-1} |  \bb{E}^{n+1}_h |_1^2 + 2|1-\rho \nu| \alpha^* |\bb{E}^n_h|_1 |\bb{E}^{n+1}_h|_1 + 2\rho \widetilde{\mathcal{N}} |\bb{u}_h|_1 |\bb{E}^n_{h}|_1  |\bb{E}^{n+1}_h|_1.
 \end{align*}
Applying Lemma \ref{lem:wellposedness} and the Young's inequality $2ab<\varepsilon a^2+\varepsilon^{-1}b^2$, we have
\begin{align*}
& 2\alpha_* | \bb{E}^{n+1}_h  | _1^2+\alpha  \|  e^{n+1}_h \| ^2_0 \\
& \le \alpha \|  e^n_h \| ^2_0 + \rho^2 \alpha^{-1} |  \bb{E}^{n+1}_h |_1^2 + 2[ |1-\rho \nu| \alpha^* + \rho (\alpha_* \nu) \widetilde{\Lambda} ] |\bb{E}^n_h|_1 |\bb{E}^{n+1}_h|_1  \\
& \le \alpha \|  e^n_h \| ^2_0 + \rho^2 \alpha^{-1} |  \bb{E}^{n+1}_h |_1^2 + [ |1-\rho \nu|\alpha^* + \rho (\alpha_* \nu) \widetilde{\Lambda} ](\varepsilon |  \bb{E}^{n+1}_h | ^2_1 +\varepsilon^{-1}  |  \bb{E}^{n}_{h} | ^2_1  ),
\end{align*}
where $\varepsilon>0$ is a constant to be determined.
Let $r = \alpha^* |1-\rho \nu| + \rho (\alpha_* \nu) \widetilde{\Lambda} $. The estimate can be rewritten as
\begin{equation}\label{valueeq:convergence}
(2\alpha_* -\rho^2 \alpha^{-1} - r \varepsilon) | \bb{E}^{n+1}_h  | _1^2 + \alpha  \|  e^{n+1}_h \| ^2_0 \le  r \varepsilon^{-1} |  \bb{E}^{n}_{h} | ^2_1 + \alpha \|  e^n_h \| ^2_0.
\end{equation}

On the other hand, we try to find $\varepsilon>0$ such that
\[2\alpha_* -\rho^2 \alpha^{-1} - r \varepsilon = r \varepsilon^{-1}.\]
That is, $\varepsilon$ is the positive root of the quadratic equation
\[r \varepsilon^2 - (2\alpha_* -\rho^2 \alpha^{-1}) \varepsilon + r = 0.\]
The discriminant of the equation is
\begin{equation}\label{discriminant}
\Delta = (2\alpha_* -\rho^2 \alpha^{-1})^2 - 4 r^2
= (2\alpha_* -\rho^2 \alpha^{-1} + 2r)(2\alpha_* -\rho^2 \alpha^{-1} - 2r),
\end{equation}
which is positive when
\[2\alpha_* -\rho^2 \alpha^{-1} - 2r > 0 \qquad \mbox{or} \qquad
\rho^2 /(2\alpha) + r < \alpha_*.\]

In this case, we set
\[\varepsilon = \varepsilon^* = \frac{2\alpha_* -\rho^2 \alpha^{-1} + \sqrt{\Delta} }{2r} ,\]
which results in
\[
\mathcal{D} | \bb{E}^{n+1}_h  | _1^2 + \alpha  \|  e^{n+1}_h \| ^2_0
 \le \mathcal{D} | \bb{E}^n_h  | _1^2 + \alpha  \|  e^n_h \| ^2_0,
\]
with
\[\mathcal{D} = r /\varepsilon^* = \alpha_* - \frac{\rho^2}{2\alpha} + \sqrt{(\alpha_* - \frac{\rho^2}{2\alpha})^2 - r^2} > 0. \]
The proof is completed by combining Lemma \ref{lem:NSS}.
\end{proof}

With the help of the boundedness of sequences $\{|\bb{u}_h^n|_1\}$ and $\{\|p_h^n\|_0\}$, we are in a position to establish the convergence of the A-H based method.

\begin{theorem} Let $(\bb{u}_h, p_h) \in \bb{W}_h \times P_h$ be the solution of the discrete variational problem \eqref{NSVEM} and $\{(\bb{u}_h^n, p_h^n)\}$ be the function sequence generated by the A-H iteration \eqref{NSAHn}. If $\widetilde{\Lambda} <1$ and the condition \eqref{convergeCondition} holds, then we have the following estimate
\[\mathcal{F}  |  \bb{E}^{n+1}_h | ^2_1+ \alpha \|  e^{n+1}_h \| ^2_0 \leq \gamma (\mathcal{F} |  \bb{E}^n_h | ^2_1 +\alpha \|  e^n_h \| ^2_0 ),\]
where $\mathcal{F} \in (0, \alpha_*)$ and $\gamma \in(0,1)$ are two generic constants independent of $n$ and $h$.
\end{theorem}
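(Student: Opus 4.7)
The plan is to sharpen the recursive inequality \eqref{valueeq:convergence} that the author already established in the proof of Lemma \ref{lem:boundedness}, and then close the contraction by using the discrete inf-sup condition to transfer decay from the velocity energy onto the pressure energy.

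In the boundedness argument, $\varepsilon$ is chosen equal to the larger root $\varepsilon^*$ of $r\varepsilon^2-(2\alpha_*-\rho^2\alpha^{-1})\varepsilon+r=0$, which makes the two velocity coefficients equal. I would instead pick $\varepsilon$ strictly between the two roots, which is possible precisely because condition \eqref{convergeCondition} gives the \emph{strict} margin $2\alpha_*-\rho^2\alpha^{-1}-2r>0$. The same computation as in \eqref{valueeq:convergence} then yields
\[
A\,|\bb{E}^{n+1}_h|_1^2+\alpha\|e^{n+1}_h\|_0^2 \;\le\; B\,|\bb{E}^{n}_h|_1^2+\alpha\|e^{n}_h\|_0^2,
\qquad A=2\alpha_*-\rho^2\alpha^{-1}-r\varepsilon>B=r\varepsilon^{-1}>0,
\]
so that there is now a genuine gap $A-B>0$ on the velocity side. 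The pressure coefficient, however, is still $\alpha$ on both sides.

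Next I would control the pressure error by the velocity errors. Rearranging the residual identity \eqref{eq:Subtractresult} gives $b(\bb{v}_h,e_h^n)$ as a sum of four terms involving $a_h(\bb{E}_h^{n+1}-\bb{E}_h^{n},\cdot)$, $a_h(\bb{E}_h^{n},\cdot)$, and the two trilinear residuals. Applying the continuity bound \eqref{normeq}, the continuity \eqref{Nhbound} of $N_h$, the uniform bounds on $|\bb{u}_h|_1$ and $|\bb{u}_h^{n}|_1$ from Lemma \ref{lem:wellposedness} and Lemma \ref{lem:boundedness}, and then Lemma \ref{lem:infsup}, I would obtain
\[
\widetilde{\beta}\,\|e_h^{n}\|_0 \;\le\; L_1\,|\bb{E}_h^{n+1}|_1 + L_2\,|\bb{E}_h^{n}|_1,
\]
with $L_1,L_2$ independent of $n$ and $h$, and hence $\widetilde{\beta}^2\|e_h^{n}\|_0^2\le 2L_1^2|\bb{E}_h^{n+1}|_1^2+2L_2^2|\bb{E}_h^{n}|_1^2$.

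Finally, with a small parameter $\sigma>0$, I add $\sigma$ times the nonnegative quantity $2L_1^2|\bb{E}_h^{n+1}|_1^2+2L_2^2|\bb{E}_h^{n}|_1^2-\widetilde{\beta}^2\|e_h^{n}\|_0^2$ to the right-hand side of the first inequality, which produces
\[
(A-2\sigma L_1^2)\,|\bb{E}^{n+1}_h|_1^2+\alpha\|e^{n+1}_h\|_0^2 \;\le\; (B+2\sigma L_2^2)\,|\bb{E}^{n}_h|_1^2+(\alpha-\sigma\widetilde{\beta}^2)\,\|e^{n}_h\|_0^2.
\]
Setting $\mathcal{F}=A-2\sigma L_1^2$, the desired contraction holds with
\[
\gamma \;=\; \max\!\Bigl(\tfrac{B+2\sigma L_2^2}{\mathcal{F}},\;1-\tfrac{\sigma\widetilde{\beta}^2}{\alpha}\Bigr).
\]
The second ratio is strictly below one for every $\sigma>0$; the first is below one as soon as $2\sigma(L_1^2+L_2^2)<A-B$, which holds for sufficiently small $\sigma$ thanks to the margin produced in Step 1. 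The remaining requirement $\mathcal{F}\in(0,\alpha_*)$ is then ensured by a further mild adjustment of $\sigma$. The main obstacle is precisely this simultaneous parameter balancing: one must reserve enough of the margin $A-B$ to absorb $2\sigma(L_1^2+L_2^2)$ while keeping $\mathcal{F}$ inside $(0,\alpha_*)$ and all constants uniform in $h$. The strictness in \eqref{convergeCondition}, together with the $h$-uniform constants $\alpha_*,\alpha^*,\widetilde{\beta},\widetilde{\mathcal{N}}$ and the uniform velocity bound from Lemma \ref{lem:boundedness}, is what makes this balancing possible.
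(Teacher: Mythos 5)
Your proposal is correct and follows essentially the same route as the paper's proof: the $\varepsilon$-parameterized energy inequality \eqref{valueeq:convergence} with a strict gap between the velocity coefficients, the inf-sup bound $\widetilde{\beta}\|e_h^n\|_0 \le L_1|\bb{E}_h^{n+1}|_1 + L_2|\bb{E}_h^n|_1$ derived from \eqref{eq:Subtractresult}, and the weighted combination with a small parameter (your $\sigma$ is the paper's $\delta$ up to the factor $2L_1^2$). The only cosmetic difference is that you take $\gamma$ as the maximum of the two coefficient ratios, whereas the paper equalizes them by solving a quadratic in $\delta$ at the midpoint choice $\varepsilon = s/r$ — a choice which, incidentally, also disposes of your residual worry about $\mathcal{F}<\alpha_*$, since it gives $A = s = \alpha_* - \rho^2/(2\alpha) < \alpha_*$ directly.
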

\begin{proof}
From Lemma \ref{lem:boundedness}, we conclude that there is a positive constant $\mathcal{F}_1$,  independent of $n$ and $h$, such that $|\bb{u}^n_h |_1  \le \mathcal{F}_1$. Since the inf-sup condition in Lemma \ref{lem:infsup}, Eq.~\eqref{eq:Subtractresult}, the stability \eqref{normeq} and Lemma \ref{lem:wellposedness}, we obtain
\begin{align*}
\widetilde{\beta}  \|  e^n_h \|_0
& \le \sup_{\bb{v}_h \in \bb{W}_h} \frac{b(\bb{v}_h, e_h^n )}{  |  \bb{v}_h| _1}\\
& \le \nu   \alpha^*  |\bb{E}^n_h|_1 + \rho^{-1} \alpha^* |\bb{E}^{n+1}_h-\bb{E}^{n}_h|_1  + \widetilde{\mathcal{N}}  |  \bb{E}^n_{h} | _1 |  \bb{u}_h | _1+\widetilde{\mathcal{N}} |  \bb{u}^n_h | _1 |\bb{E}_h^{n+1} | _1\\
& \le \nu \alpha^*| \bb{E}^n_h| _1  + \rho^{-1} \alpha^* |\bb{E}^{n+1}_{h} |_1 + \rho^{-1} \alpha^*|\bb{E}^{n}_{h}|_1 + \alpha_*\nu \widetilde{\Lambda}   | \bb{E}^n_h |_1+\widetilde{\mathcal{N}}\mathcal{F}_1 |  \bb{E}^{n+1}_{h} | _1\\
& = (\alpha^* \nu+\alpha_*\nu  \widetilde{\Lambda} + \rho^{-1} \alpha^*) |  \bb{E}^{n}_h | _1+ (\widetilde{\mathcal{N}}\mathcal{F}_1 +\rho^{-1}\alpha^*) |  \bb{E}^{n+1}_h  | _1.
\end{align*}
Squaring the above equation and applying the basic inequality $(u+v)^2\le 2(u^2+v^2)$ for any real numbers $u$ and $v$, we arrive at
\[\widetilde{\beta}^2 \|  e^n_h \| ^2_0\leq 2(\alpha^* \nu+\alpha_*\nu  \widetilde{\Lambda} + \rho^{-1} \alpha^*)^2 |  \bb{E}^n_h  | ^2_1 + 2 (\widetilde{\mathcal{N}}\mathcal{F}_1 +\rho^{-1}\alpha^*)^2 |  \bb{E}^{n+1}_h | ^2_1,\]
which implies
\begin{equation}	\label{imply.eq}
|\bb{E}^{n+1}_h |^2_1 \ge \mathcal{F}_2 \|  e^n_h \| ^2_0 - \mathcal{F}_3 |  \bb{E}^n_h | ^2_1,
\end{equation}
where
\[
\mathcal{F}_2 = \frac{1}{2}\Big(\frac{\widetilde{\beta}}{\widetilde{\mathcal{N}}\mathcal{F}_1 +\rho^{-1}\alpha^*}\Big)^2,  \qquad
\mathcal{F}_3 = \Big( \frac{\alpha^* \nu+\alpha_*\nu  \widetilde{\Lambda} + \rho^{-1} \alpha^*}{\widetilde{\mathcal{N}}\mathcal{F}_1 +\rho^{-1}\alpha^*}\Big)^2. 	\]

Rewrite \eqref{valueeq:convergence} as
\begin{equation*}
    \delta |  \bb{E}^{n+1}_{h} | ^2_1 +
    (2\alpha_* -\rho^2 \alpha^{-1} - r \varepsilon-\delta) | \bb{E}^{n+1}_h  | _1^2 + \alpha  \|  e^{n+1}_h \| ^2_0 \le  r \varepsilon^{-1} |  \bb{E}^{n}_{h} | ^2_1 + \alpha \|  e^n_h \| ^2_0.
\end{equation*}
where $\varepsilon, \delta > 0$ are parameters to be determined. This together with \eqref{imply.eq} yields
\begin{equation}\label{rateeq}
(2\alpha_* -\rho^2 \alpha^{-1} - r \varepsilon-\delta) |  \bb{E}^{n+1}_h | ^2_1+\alpha \|  e^{n+1}_{h} \|  ^2_0\leq (r\varepsilon^{-1}+\delta \mathcal{F}_3) |  \bb{E}^n_h | ^2_1 + (\alpha-\delta \mathcal{F}_2) \|  e^n_h \| ^2_0.
\end{equation}
To derive the desired estimate, we choose $\varepsilon, \delta>0$ such that
\begin{align}	
& \frac{2\alpha_* -\rho^2 \alpha^{-1} - r \varepsilon-\delta}{\alpha} = \frac{r\varepsilon^{-1}+\delta \mathcal{F}_3}{\alpha-\delta\mathcal{F}_2}, \label{eq5} \\
& 2\alpha_*-\rho^2 \alpha^{-1}-r\varepsilon-\delta>0,   \qquad  \alpha-\delta \mathcal{F}_2>0.  \label{ineqdelta}
\end{align}

Furthermore, we can express \eqref{eq5} as an quadratic equation in terms of $\delta$, i.e.,
\begin{equation} \label{eq6}
\mathcal{F}_2\delta^2-[\alpha  +\alpha\mathcal{F}_3+\mathcal{F}_2(2\alpha_*- \rho^2\alpha^{-1}-r\varepsilon)]\delta+\alpha(2\alpha_*-\rho^2\alpha^{-1}-r\varepsilon-r\varepsilon^{-1})=0.
\end{equation}
Under the condition of \eqref{ineqdelta}, one can find that
\begin{align*}
\alpha(2\alpha_*-\rho^2\alpha^{-1}-r\varepsilon-r\varepsilon^{-1})
& = [\alpha  +\alpha\mathcal{F}_3+\mathcal{F}_2(2\alpha_*- \rho^2\alpha^{-1}-r\varepsilon)]\delta - \mathcal{F}_2\delta^2 \\
& \ge (\alpha  +\alpha\mathcal{F}_3+ \delta \mathcal{F}_2)\delta - \mathcal{F}_2\delta^2\\
&>0
\end{align*}
or
\[ -r\varepsilon^2 + (2\alpha_*-\rho^2\alpha^{-1}) \varepsilon -r > 0.\]

In view of \eqref{discriminant}, the quadratic function on the left-hand side has two distinct roots under the condition of \eqref{convergeCondition}, given as
\[\varepsilon_1 = \frac{(2\alpha_*-\rho^2\alpha^{-1}) -\sqrt{\Delta} }{2r}, \qquad
\varepsilon_2 = \frac{(2\alpha_*-\rho^2\alpha^{-1}) + \sqrt{\Delta} }{2r}.\]
This implies that $\varepsilon$ must satisfy $\varepsilon_1 < \varepsilon< \varepsilon_2$ and we set
\[\varepsilon = \varepsilon^*:= \frac{2\alpha_*-\rho^2\alpha^{-1} }{2r} = \frac{\alpha_*-\rho^2/(2\alpha)  }{r}.\]
Thus, Eq.~\eqref{eq6} becomes
\begin{equation}
	a\delta^2-b\delta+c=0, \nonumber
\end{equation}
where
\[
a=\mathcal{F}_2, \qquad b=\alpha+\alpha\mathcal{F}_3+\mathcal{F}_2s, \qquad c=\alpha (s-\frac{r^2}{s}), \qquad
	s:=\alpha_*  -\frac{\rho^2}{2\alpha}.
\]

Since $a, b, c>0$ and
\[
b^2-4ac>(\alpha+\mathcal{F}_2s)^2-4\alpha \mathcal{F}_2s\ge 0,
\]
the equation in Eq.~\eqref{eq6} has two positive roots. So we can take
\[\delta=\delta^*:=\frac{b-\sqrt{b^2-4ac}}{2a}.\]

With the parameters $\varepsilon$ and $\delta$ chosen as above, we plug them into \eqref{rateeq} and use \eqref{eq5} to obtain
\[
(2\alpha_* -\rho^2 \alpha^{-1} - r \varepsilon-\delta) |  \bb{E}^{n+1}_h | ^2_1+\alpha \|  e^{n+1}_{h} \|  ^2_0\leq (r\varepsilon^{-1}+\delta \mathcal{F}_3) |  \bb{E}^n_h | ^2_1 + (\alpha-\delta \mathcal{F}_2) \|  e^n_h \| ^2_0.
\]
\[\mathcal{F} |  \bb{E}^{n+1}_h | ^2_1+\alpha \|  e^{n+1}_h \| ^2_0 \leq \gamma(\mathcal{F} |  \bb{E}^n_h | ^2_1+\alpha \|  e^n_h \| ^2_0), \]
where
\[\mathcal{F} =\alpha_*-\frac{\rho^2}{2\alpha}-\delta^* = s - \delta^*, \qquad \gamma=1-\alpha^{-1}\delta^*\mathcal{F}_2.\]
It is apparent that $\mathcal{F}<\alpha_*$ and $\gamma<1$. Following the same discussion in the proof of \cite[Theorem 3.1]{Chen-Huang-Sheng-2017},  we can also derive $\mathcal{F}, \gamma>0$. This completes the proof.
\end{proof}

Let $\widetilde{\Lambda}<1$ and $\alpha_* \le \alpha^*$. We can rewrite condition \eqref{convergeCondition} as follows:
\begin{itemize}
  \item If $\rho \le \frac{1}{\nu}$,  Eq.~\eqref{convergeCondition} can be written as
\[ \frac{\rho^2}{2\alpha} < \alpha_* - \alpha^* + \rho \nu (\alpha^*  - \alpha_* \widetilde{\Lambda}).\]
 From
  \[\alpha_* - \alpha^* + \alpha^*\rho \nu  - \alpha_* \rho \nu \widetilde{\Lambda}>0,\]
  we obtain
  \[\rho > \frac{1}{\nu} \frac{\alpha^* - \alpha_*}{\alpha^* - \alpha_* \widetilde{\Lambda}}.\]
  Since $\frac{\alpha^* - \alpha_*}{\alpha^* - \alpha_* \widetilde{\Lambda}} < 1$, we get
  \[\alpha > \frac{\rho^2}{2(\alpha_*-\alpha^*+\rho\nu(\alpha^*-\alpha_*\widetilde{\Lambda}))}, \qquad
  \frac{1}{\nu} \frac{\alpha^* - \alpha_*}{\alpha^* - \alpha_* \widetilde{\Lambda}} < \rho \le \frac{1}{\nu}. \]

  \item If $\rho > \frac{1}{\nu}$, Eq.~\eqref{convergeCondition} can be written as
  \[ \frac{\rho^2}{2\alpha} < \alpha^* + \alpha_*   - \rho \nu (\alpha^*    + \alpha_* \widetilde{\Lambda}).\]
  From
  \[\alpha^* + \alpha_*   - \rho \nu (\alpha^*    + \alpha_* \widetilde{\Lambda})>0,\]
  we get
  \[\rho < \frac{1}{\nu} \frac{\alpha^* + \alpha_*}{\alpha^*    + \alpha_* \widetilde{\Lambda}}.\]
  Observing that $\frac{\alpha^* + \alpha_*}{\alpha^*    + \alpha_* \widetilde{\Lambda}}>1$, we have
  \[\alpha>\frac{\rho^2}{2(\alpha^* + \alpha_*   - \rho \nu (\alpha^*    + \alpha_* \widetilde{\Lambda}))}, \qquad
  \frac{1}{\nu} < \rho < \frac{1}{\nu} \frac{\alpha^* + \alpha_*}{\alpha^*    + \alpha_* \widetilde{\Lambda}}.\]
\end{itemize}
 The revised condition above matches the one presented in Remark 3.1 of \cite{Chen-Huang-Sheng-2017} when $\alpha_* = \alpha^* = 1$.

\section{Numerical results} \label{sec:numerical}

In this section, we report the performance of the A-H iteration for the divergence-free virtual element discretization with several examples by testing the accuracy and the robustness with respect to the parameter $\nu$. We only consider the lowest-order element ($k = 2$). Unless otherwise specified, the domain $\Omega$ is taken as the unit square $(0,1)^2$ and we set $\nu = 1$. Details of the implementation for the projections in the lowest-order case are provided in Appendix \ref{app:computation}. The influence of parameters on the A-H method can be analyzed as in \cite[Section 4.1]{Chen-Huang-Sheng-2017}.

\subsection{Test of the convergence rates}

\begin{example}\label{ex:analytic}
 We first study the incompressible N-S equations with an explicit solution given by
\[
	\bb{u}(x,y) = \begin{bmatrix}
		-\frac12\cos(x)^2\cos(y)\sin(y) \\
		\frac12\cos(y)^2\cos(x)\sin(x)
	\end{bmatrix}, \qquad
	p(x,y) = \sin x - \sin y,
\]
where the function $p$ satisfies $p\in L_0^2(\Omega)$.
\end{example}

Let $\bb{u}$ be the exact solution and $\bb{u}_h$ the discrete solution given by \eqref{NSVEM}. According to Theorem 4.6 of \cite{Beirao-Lovadina-Vacca-2018}, there holds
\begin{equation}\label{estimation}
| \bb{u}- \bb{u}_h|_1 + \| p - p_h\| \lesssim h^k,
\end{equation}
if $\bb{u}, \bb{f} \in \bb{H}^{k+1}$. Let $\bb{u}_h^n$ be the solution of the A-H iteration.
Since the VEM solution is not explicitly known inside the polygonal elements, we will evaluate the errors by comparing the exact solution $\bb{u}$ with the elliptic projection $\bb{\Pi}^K \bb{u}_h^n$. In this way, the discrete errors are quantified by
\begin{align*}
 \text{ErruH1} = \Big(\sum_{K \in \mathcal{T}_h} |\bb{u} - \bb{\Pi}^K \bb{u}_h^n|^2_{1,K}\Big)^{1/2},  \qquad  \text{ErrpL2} = \|p-p_h^n \|.
\end{align*}
The iteration stops when $\|p_h^{n+1}-p_h^n \| < h^4$, where $h$ is maximum diameter of the elements in $\mathcal{T}_h$. The parameters for the A-H method are chosen as $\rho = \frac{1}{2\nu}$ and $\alpha = \rho^2$.

To test the accuracy of the proposed method, we consider a sequence of meshes, which is a Centroidal Voronoi Tessellation of the unit square in 32, 64, 128, 256 and 512 polygons. These meshes are generated by the MATLAB toolbox~-~PolyMesher introduced in \cite{Talischi-Paulino-Pereira-2012}. We report the nodal values of the exact solution and the piecewise elliptic projection $\bb{\Pi}^K \bb{u}_h^n$ in Fig.~\ref{fig:Ex1solutions}, where $n$ denotes the final iteration number.  The discrete errors are presented in Tab.~\ref{tab:Ex1solutions}.

\begin{figure}[!thb]
	\centering
	\includegraphics[scale=0.5, trim = 80 150 80 150, clip]{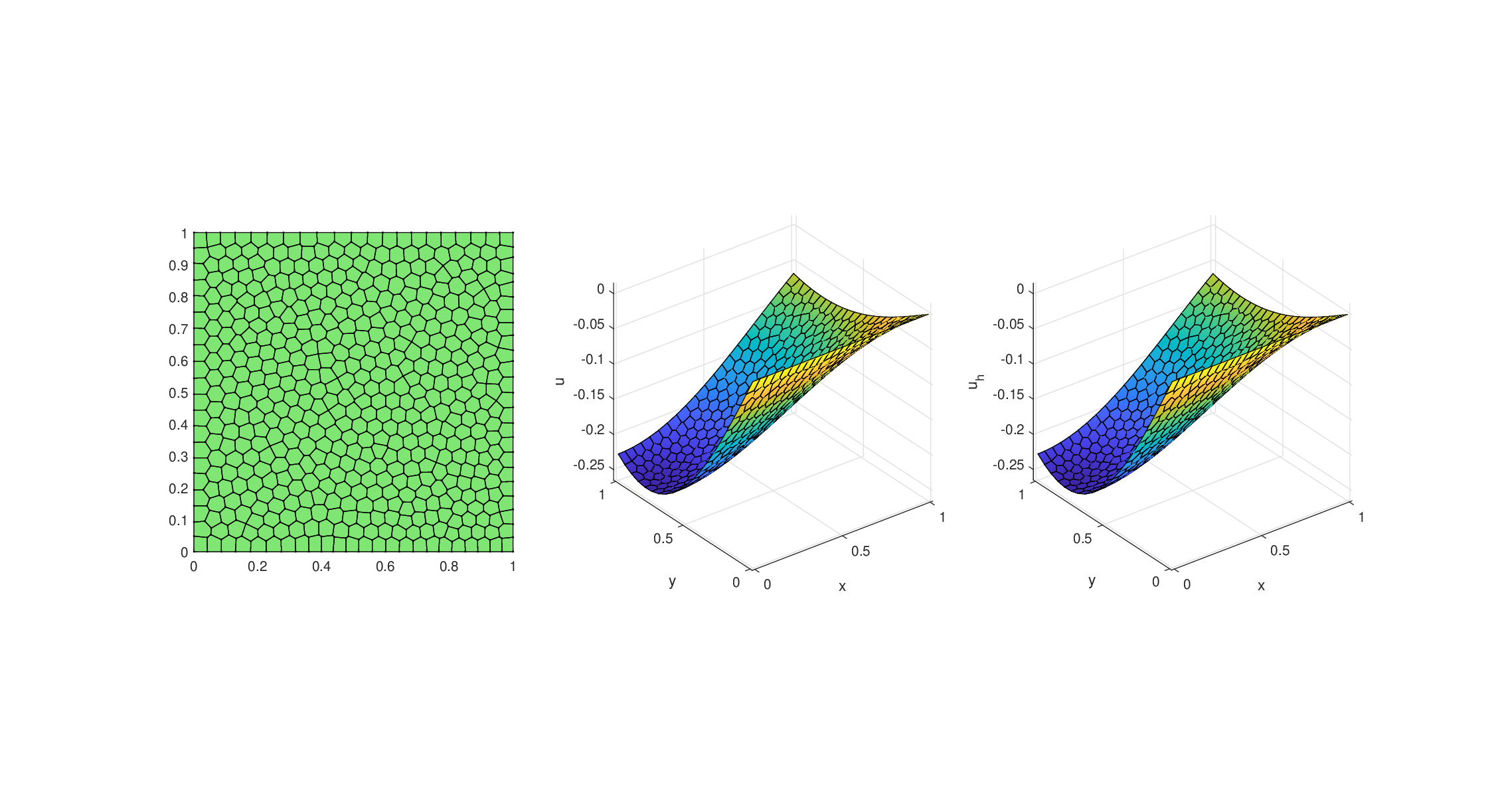}
	\caption{Numerical and exact solutions for Example \ref{ex:analytic}.}
	\label{fig:Ex1solutions}
\end{figure}

\begin{table}[!htb]
  \centering
  \caption{Discrete errors for Example \ref{ex:analytic}.} \label{tab:Ex1solutions}
  \begin{tabular}{ccccccc}
 \toprule
 $\sharp$Dof & $h$  & ErruH1 & ErrpL2   \\
  \hline
      390  & 1.768e-01    &  3.31653e-03  &  1.08419e-02\\
     774   & 1.250e-01    &  1.58320e-03  &  5.46599e-03\\
    1534   & 8.839e-02    &  7.87628e-04  &  2.80466e-03\\
    3042   & 6.250e-02    &  3.91523e-04  &  1.37330e-03\\
    6090   & 4.419e-02    &  1.96342e-04  &  6.67361e-04\\
	\bottomrule
  \end{tabular}
\end{table}

The convergence orders of the errors against the mesh size $h$ are shown in Fig.~\ref{fig:Ex1rates}. Generally speaking, $h$ is proportional to $N^{-1/2}$, where $N$ is the total number of elements in the mesh. The convergence rate with respect to $h$ is estimated by assuming $\text{Err}(h) = ch^{\alpha}$, and by computing a least squares fit to this log-linear relation.
From Fig.~\ref{fig:Ex1rates} $\left(a\right)$, we observe second-order convergence for both variables, which aligns with the theoretical prediction in  \eqref{estimation}.

\begin{figure}[!thb]
	\centering
    \subfigure[$\nu = 1$]{\includegraphics[scale=0.35]{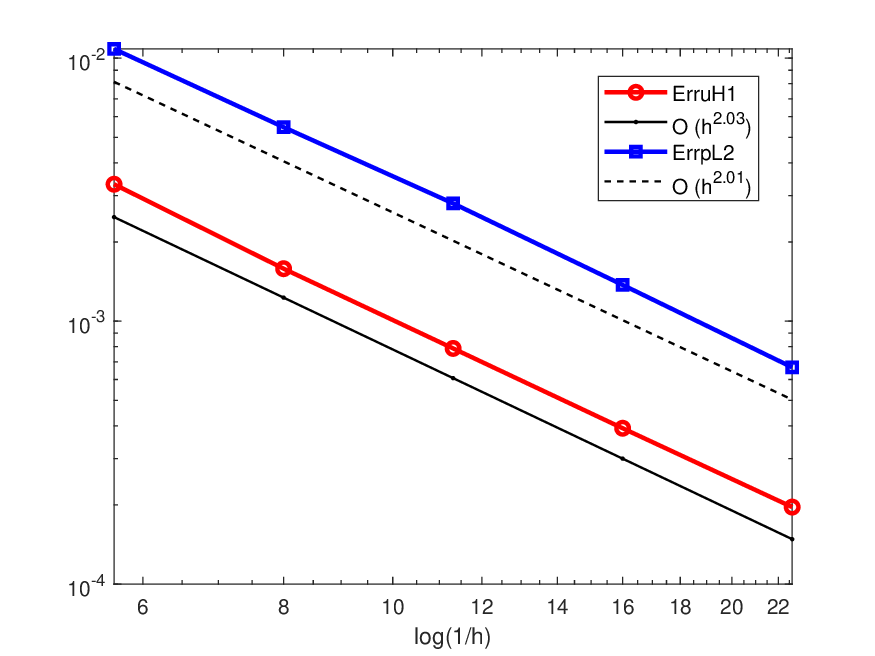}}
	\subfigure[$\nu = 0.1$]{\includegraphics[scale=0.35]{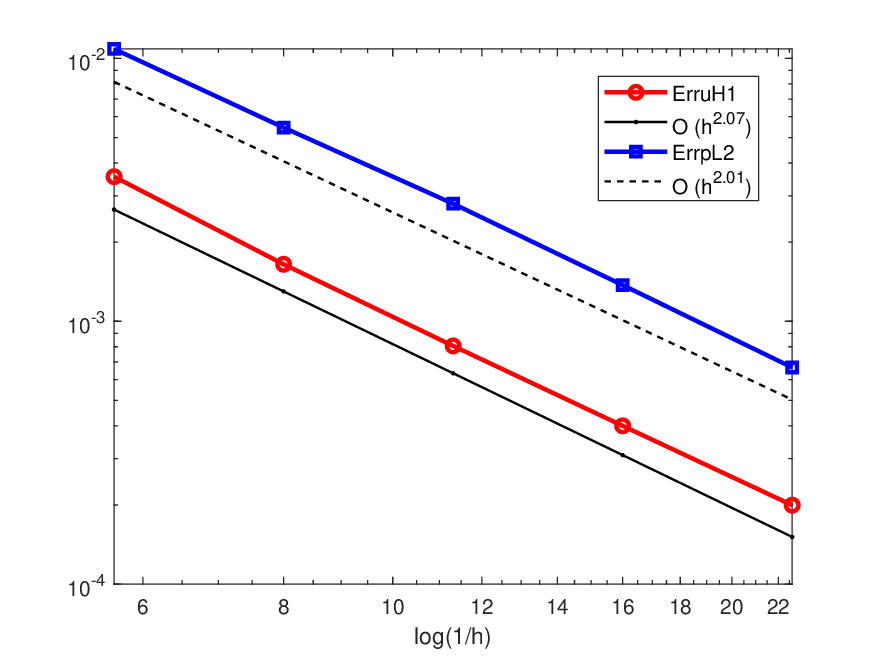}}
    \subfigure[$\nu = 0.01$]{\includegraphics[scale=0.35]{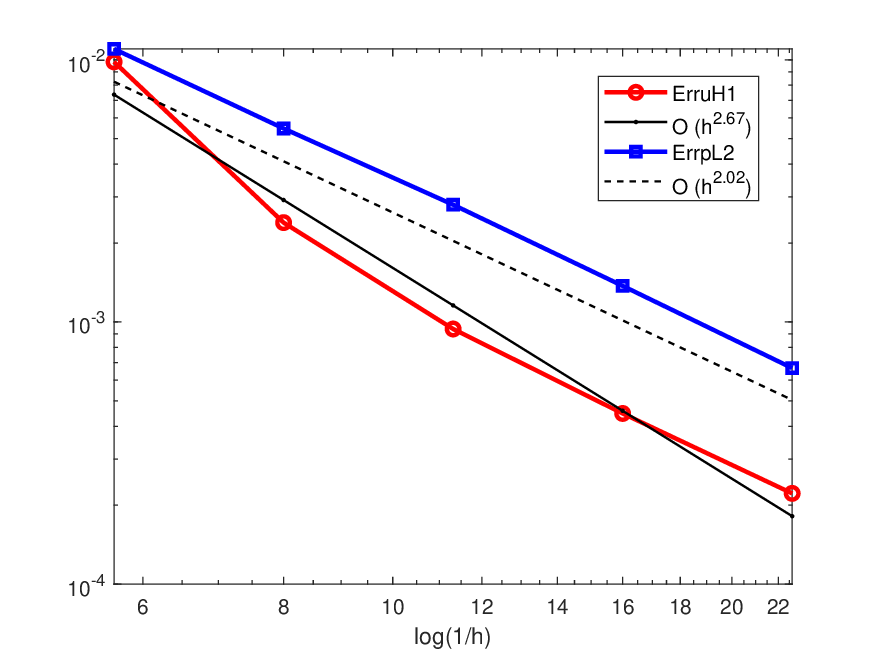}}
	\caption{Convergence rates for Example \ref{ex:analytic}.} 	\label{fig:Ex1rates}
\end{figure}

 We repeat the test for $\nu = 0.1$ and $0.01$, observing similar behavior, with the convergence orders shown in Fig.~\ref{fig:Ex1rates}(b-c) and the number of iterations provided in Tab.~\ref{tab:Ex1iteration}. Thanks to the divergence-free property of the VEM, the discrete errors remain robust with respect to $\nu$, provided that a sufficient number of iterations are performed.

\begin{table}[!htb]
  \centering
  \caption{Number of iterations for Example \ref{ex:analytic}.} \label{tab:Ex1iteration}
  \begin{tabular}{ccccccc}
			\toprule
			$\sharp$Dof & $h$  & $\nu = 1$ & $\nu=0.1$ & $\nu = 0.01$  \\
			\cmidrule(r){1-7}
			390 & 1.768e-01  & 14 & 15 & 24  \\
			774 & 1.250e-01  & 19 & 20 & 47  \\
			1534 & 8.839e-02 & 24 & 25 & 76  \\
			3042 & 6.250e-02 & 30 & 31 & 109  \\
			6090 & 4.419e-02 & 36 & 35 & 143  \\
			\bottomrule
  \end{tabular}
\end{table}

\subsection{Kovasznay flow}

\begin{example} \label{ex:Kovasznay}
Kovasznay flow is steady laminar flow on rectangle domain behind a two dimensional grid. The domain is taken as $\Omega = (-0.5, 1) \times (-0.5, 1.5)$, and the exact solution is given by
\begin{align*}
& u_1(x,y) = 1-\e^{\lambda x}\cos(2\pi y),\\
& u_2(x,y) = \frac{\lambda}{2\pi} \e^{\lambda x}\sin(2\pi y),\\
& p(x,y) = -\frac{1}{2}\e^{2\lambda x} + p_0,
\end{align*}
where $p_0$ is a constant to ensure $p\in L_0^2(\Omega)$. The parameter
\[\lambda=\frac{R_e}{2}-\sqrt{\frac{R_e^2}{4}+4\pi^2}<0.\]
\end{example}

In the numerical simulation, the partition consists of 1000 polygons and the parameters in the A-H method are chosen as before.
Figs.~\ref{fig:Kovasznay} and \ref{fig:Kovasznay1} display the streamlines of the flow with $R_e = 1$ and $40$, respectively.
Streamlines represent the paths followed by fluid particles in a flow field. These lines are tangential to the velocity vector $\bb{u}(x,y)$ at every point $(x,y)$.  In the implementation, we first compute the values of $\bb{u}$ at the nodes of $\mathcal{T}_h$, then interpolate these values onto a uniform grid. Finally, we use the Matlab built-in function streamslice.m to plot the streamlines. From the figures, we observe that the A-H based VEM accurately captures the flow dynamics.

\begin{figure}[!thb]
	\centering
    \subfigure[Exact]{\includegraphics[scale=0.5]{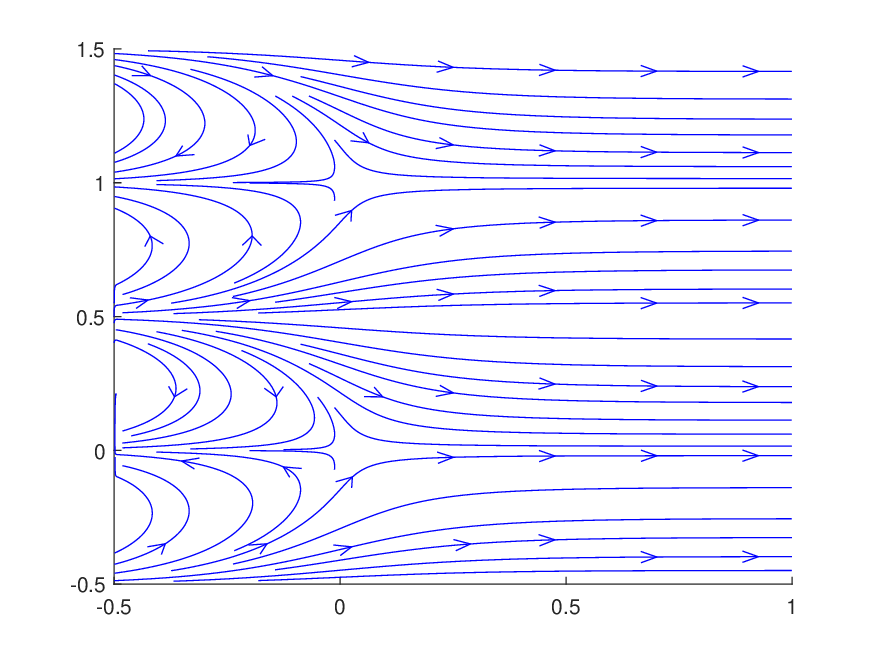}}
	\subfigure[Numerical]{\includegraphics[scale=0.5]{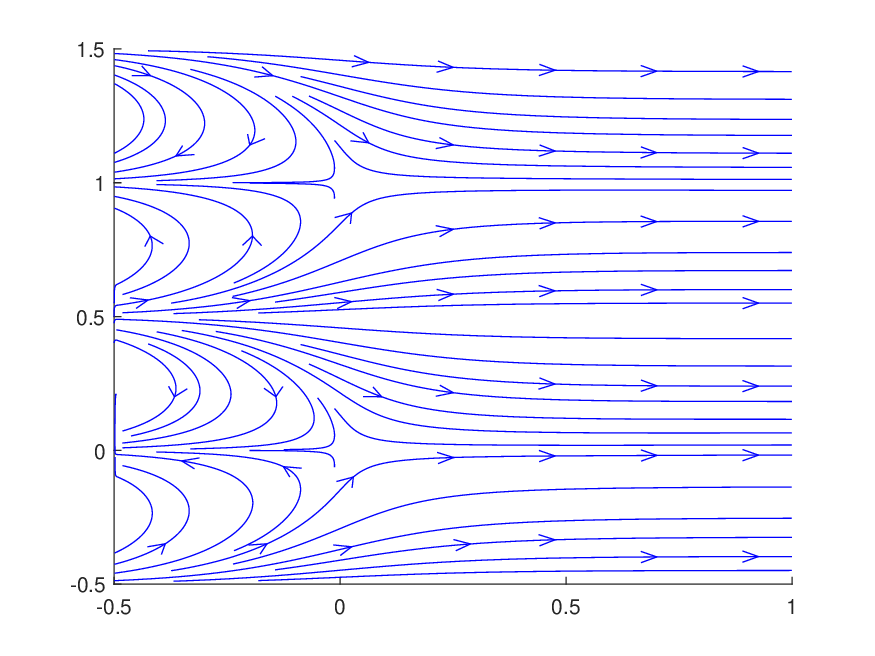}}
	\caption{Streamlines of Kovasznay flow (Example \ref{ex:Kovasznay} with $\nu = 1$).} 	\label{fig:Kovasznay}
\end{figure}

\begin{figure}[!thb]
	\centering
    \subfigure[Exact]{\includegraphics[scale=0.5]{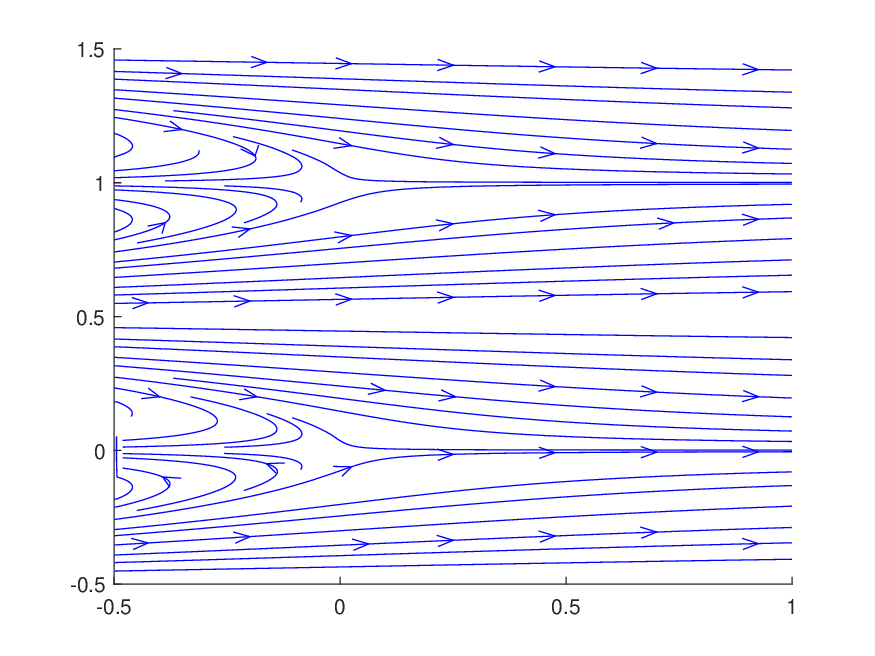}}
	\subfigure[Numerical]{\includegraphics[scale=0.5]{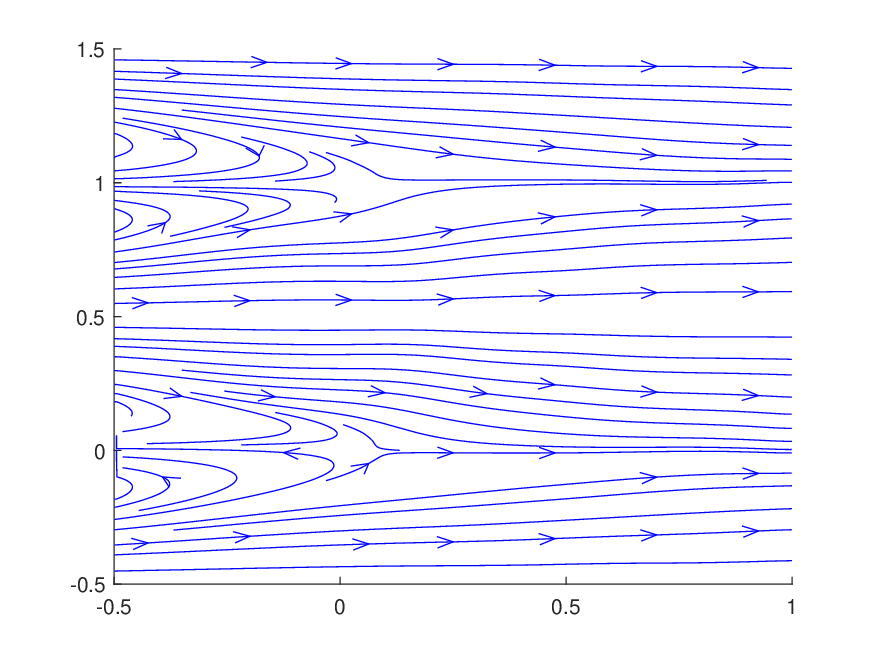}}
	\caption{Streamlines of Kovasznay flow (Example \ref{ex:Kovasznay} with $\nu = 40$).} 	\label{fig:Kovasznay1}
\end{figure}

\subsection{The lid-driven cavity problem}

\begin{example}\label{ex:cavity}
The boundary condition for the velocity on the top edge is $\bb{u} = [1, 0]^{\intercal}$, and no-slip condition is applied on the other edges. The body force is taken as $\bb{f} = \bb{0}$.
\end{example}

We use a uniform triangulation mesh in this example, and the partition consists of $64 \times 64$ cells. As for the parameters used in the A-H method, we take $\rho = 1.2$ and $\alpha = 7R_e/10$ for $R_e = 100$ as in \cite{Chen-Huang-Sheng-2017}. Fig.~\ref{fig:cavity} shows
the streamlines from the A-H method, which demonstrates close agreement with the expected physical behavior of the fluid motion as observed in Fig.~2a of \cite{Chen-Huang-Sheng-2017}.

 \begin{figure}[!thb]
	\centering
     \includegraphics[scale=0.4, trim = 80 40 0 20, clip]{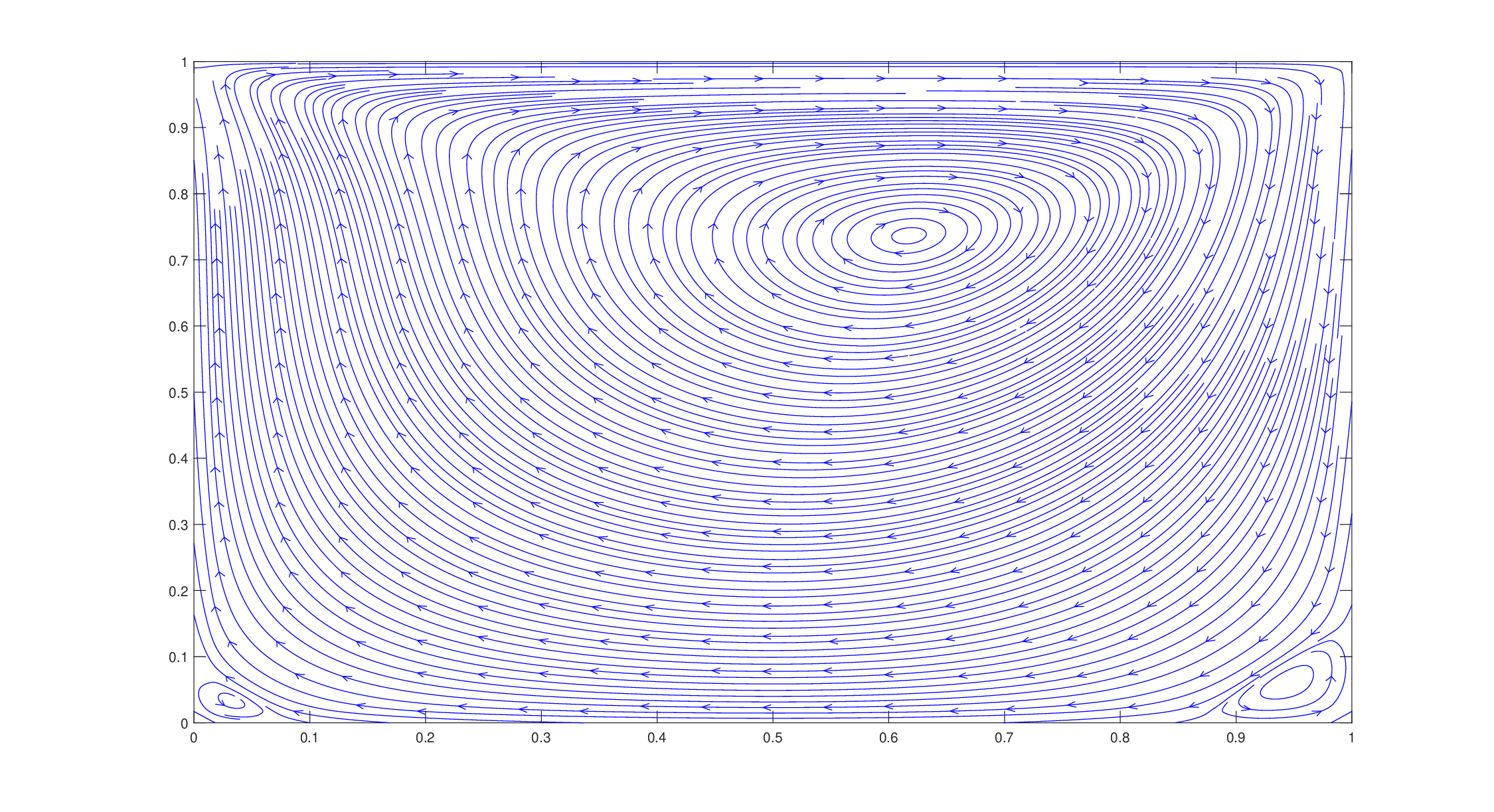} \\
	\caption{Streamlines of lid-driven cavity problem (Example \ref{ex:cavity} with $\nu = 100$).} 	\label{fig:cavity}
\end{figure}
%
%\subsection{Backward-facing step flow}
%
%\subsection{Channel flow past a circular cylinder}

\subsection{Extension to unsteady Navier-Stokes equations} \label{sec:unsteadyNS}

Let $\Omega \subset \mathbb{R}^2$ be a convex polygonal domain with a Lipschitz continuous boundary $\partial \Omega$. Given   an applied body force $\bb{f} \in \bb{L}^2(\Omega)$, the unsteady incompressible Navier-Stokes problem is to find the fluid velocity $\bb{u}$ and fuid pressure $\bb{p}$ such that
\begin{equation}\label{timeNS}
\begin{cases}
\bb{u}_t - \nu \Delta \bb{u} + (\bb{u} \cdot \nabla )\bb{u} - \nabla p = \bb{f} \quad & \text{in}~~\Omega \times (0,T) ,  \\
 \text{div} \bb{u} = 0 \quad  & \text{in}~~\Omega \times (0,T) ,  \\
\bb{u} = \bb{0} \quad & \text{on} ~~\partial \Omega \times (0,T),\\
\bb{u}(\cdot, t=0) = \bb{u}_0\quad & \text{on} ~~\partial \Omega \times \{0\}.
\end{cases}
\end{equation}

 For the time discretization, let us first consider a time-independent problem of the following abstract form:
\[
\bb{u}_t + L (\bb{u}, p) = \bb{f},
\]
where $L$ is a linear or nonlinear operator independent of the time variable. From time $t$ to $t + \Delta t$, the backward Euler method is given by
\[
\frac{\bb{u}^{t+\Delta t} - \bb{u}^t}{\Delta t} + L (\bb{u}^{t+\Delta t}, p) = \bb{f}^{t+\Delta t}.
\]

For simplicity, we continue to use $\bb{u}$ to denote $\bb{u}^{t+\Delta t}$ (which only depends on spatial variables) and use $\bb{u}^0$ to denote $\bb{u}^t$ (similarly for the right-hand side). Then the above equation becomes
\[
\frac{\bb{u} - \bb{u}^0}{\Delta t} + L (\bb{u}, p) = \bb{f},
\]
or equivalently,
\[
\frac{\bb{u}}{\Delta t} + L (\bb{u}, p) = \frac{\bb{u}^0}{\Delta t} + \bb{f},
\]
which reduces to a steady-state problem.

For the unsteady N-S equations, $L (\bb{u},p)$ represents the stationary part. The time-discretized variational problem can be written as
  \[
    \begin{cases}
      \frac{1}{\Delta t} (\bb{u}_h,\bb{v}_h) +  a_h(\bb{u}_h,\bb{v}_h)+N_h(\bb{u}_h;\bb{u}_h,\bb{v}_h) + b (\bb{v}_h,p_h) \\
      \hspace{1cm} =  \frac{1}{\Delta t} (\bb{u}^0,\bb{v}_h) + ( \bb{f}_h, \bb{v}_h ), \quad \bb{v}_h \in \bb{W}_h,  \\
      b(\bb{u}_h,q_h) = 0, \quad  q_h \in P_h.
    \end{cases}
  \]
  Let
  \[
    \tilde{a}_h(\bb{u}_h,\bb{v}_h) = \frac{1}{\Delta t} (\bb{u}_h,\bb{v}_h) +  a_h(\bb{u}_h,\bb{v}_h), \quad
    ( \tilde{\bb{f}}_h, \bb{v}_h ) = \frac{1}{\Delta t} (\bb{u}^0,\bb{v}_h) + ( \bb{f}_h, \bb{v}_h ).
  \]
We have
  \[
    \begin{cases}
      \tilde{a}_h(\bb{u}_h,\bb{v}_h)+N_h(\bb{u}_h;\bb{u}_h,\bb{v}_h) + b (\bb{v}_h,p_h) = ( \tilde{\bb{f}}_h, \bb{v}_h ), \quad \bb{v}_h \in \bb{W}_h,  \\
      b(\bb{u}_h,q_h) = 0, \quad  q_h \in P_h.
    \end{cases}
  \]
  This maintains the same spatial structure and can be solved using the A-H iteration.

\begin{example}\label{ex:timeNS}
 The explicit solution is given by
\[
	\bb{u}(x,y) = \begin{bmatrix}
		-\frac12 (t+1)^2 \cos(x)^2\cos(y)\sin(y) \\
		\frac12 (t+1)^2 \cos(y)^2\cos(x)\sin(x)
	\end{bmatrix}, \qquad
	p(x,y) = (t+1)(\sin x - \sin y),
\]
where the function $p$ satisfies $p\in L_0^2(\Omega)$.
\end{example}

For a given mesh $\mathcal{T}_h$, since we are using the backward Euler scheme for time discretization, it follows that $\Delta t \lesssim h^2$. Thus, we set $\Delta t = h^2$ in the implementation. The convergence rates for the first three meshes in Example \ref{ex:analytic} are presented in Fig.~\ref{fig:timeNS}, where second-order convergence for both variables are observed, consistent with the theoretical expectations.

\begin{figure}[!thb]
	\centering
    \subfigure[$\nu = 1$]{\includegraphics[scale=0.5]{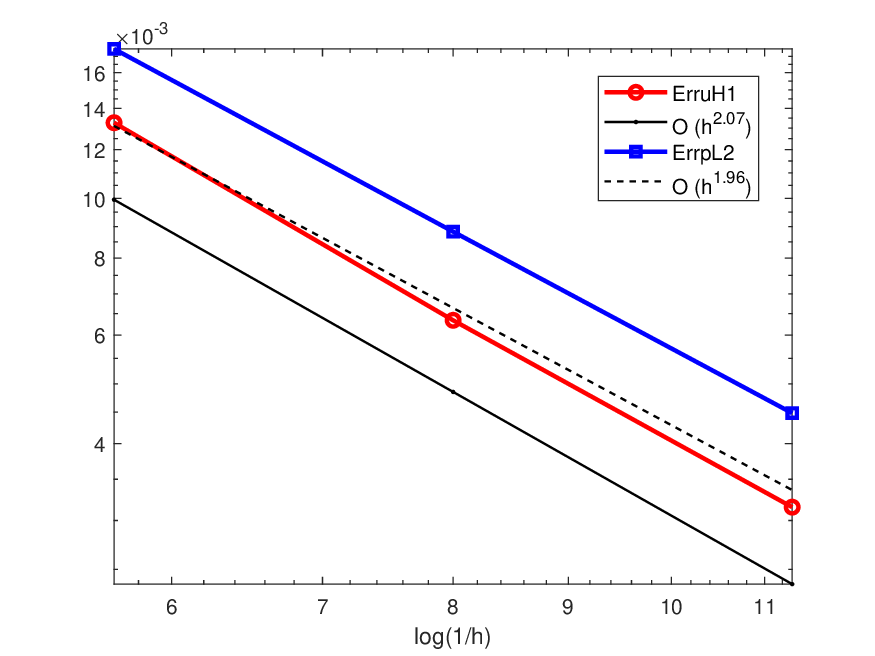}}
	\subfigure[$\nu = 0.1$]{\includegraphics[scale=0.5]{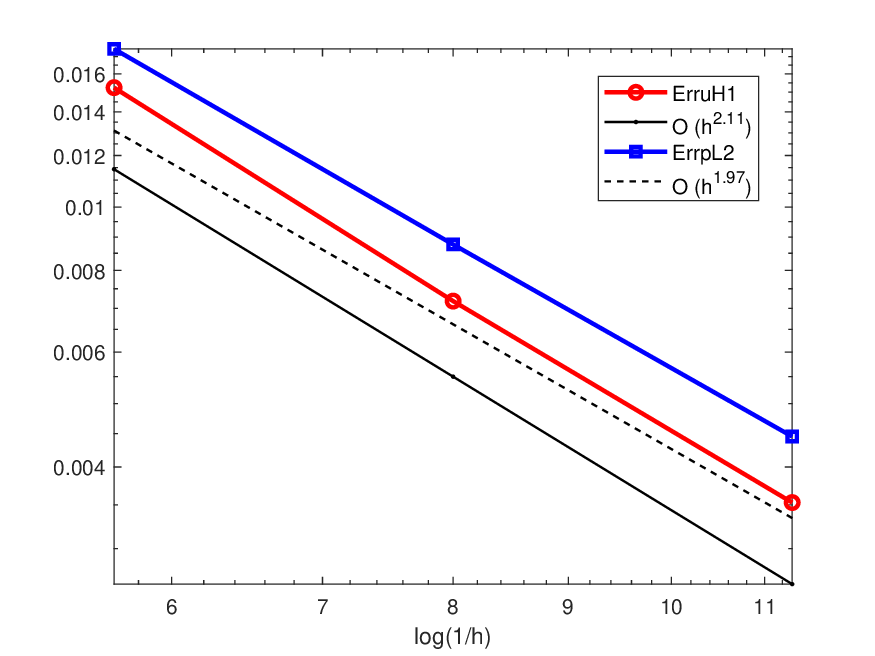}}
	\caption{Convergence rates for unsteady N-S equations (Example \ref{ex:timeNS}).} 	\label{fig:timeNS}
\end{figure}

\section{Conclusions} \label{sec:conclusion}

Divergence-free mixed VEMs are effective for discretizing steady incompressible N-S equations, resulting in nonlinear saddle-point systems. In this study, we focus on applying the A-H method from \cite{Temam1979} to solve these nonlinear saddle-point systems and analyze its convergence rate. Under several reasonable assumptions, we demonstrate that the method converges geometrically with a contraction factor independent of the mesh size.

Although the convergence rate of the A-H method does not depend on the mesh size, the CPU time increases rapidly as the virtual element mesh is refined. Therefore, optimizing and updating the method is crucial. One promising approach is combining the two-grid A-H method \cite{Du-Huang-2021-NS} with the divergence-free VEM, which will be explored in future work.

\section*{Credit authorship contribution statement}

%The authors are listed in alphabetical order.
All authors contributed equally to this work.

\section*{Declaration of competing interest}

The authors declare that they have no known competing financial interests or personal relationships that could have
appeared to influence the work reported in this paper.

\section*{Data availability}

%No data was used for the research described in the article.
The datasets generated during and/or analysed during the current study are available from the corresponding author on reasonable request.

\section*{Acknowledgements}

Yue Yu was partially supported by NSFC grant No. 12301561, the Key Project of Scientific Research Project of Hunan Provincial Department of Education (No. 24A0100) and the 111 Project (No. D23017). Shenxiang Cheng and Chuanjun Chen are supported by NSFC grant No. 12271468.

\appendix

\section{Implementation of the virtual element method} \label{app:computation}

We present the implementation details of the involved projections in the lowest-order case ($k=2$).

\subsection{Computation of the elliptic projection}

We first consider the elliptic projection on the original space $\bb{V}_k(K)$. The associate DoFs are arranged in the following order:
\begin{align*}
&\chi_i (\bb{v}) = \bb{v}_1(z_i),   \quad i = 1,\cdots, N_v,   \\
&\chi_{N_v+i} (\bb{v}) = \bb{v}_1(m_i),   \quad i = 1,\cdots, N_v,   \\
&\chi_{2N_v+i} (\bb{v}) = \bb{v}_2(z_i),   \quad i = 1,\cdots, N_v,   \\
&\chi_{3N_v+i} (\bb{v}) = \bb{v}_2(m_i),   \quad i = 1,\cdots, N_v,   \\
&\chi_{4N_v+1} (\bb{v}) = \frac{h_K}{|K|}\int_K \text{div} \bb{v} m_2(x,y){\mathrm d}x, \quad m_2(x,y) = \frac{x-x_K}{h_K}, \\
&\chi_{4N_v+2} (\bb{v}) = \frac{h_K}{|K|}\int_K \text{div} \bb{v} m_3(x,y){\mathrm d}x, \quad m_3(x,y) = \frac{y-y_K}{h_K}.
\end{align*}

Denote the basis functions of $\mathbb{B}_k(\partial K)$ by $\phi_1, \cdots, \phi_{N_v}; \phi_{N_v+1}, \cdots, \phi_{2N_v}$. Then the tensor-product space $(\mathbb{B}_k(\partial K))^2$ has the basis functions:
\[\overline{\phi}_1, \cdots, \overline{\phi}_{2N_v},  \underline{\phi}_{1}, \cdots, \underline{\phi}_{2N_v},\]
where
\[{\overline \phi_i} = \begin{bmatrix}
  \phi_i \\
  0
\end{bmatrix} ,\qquad
{\underline \phi_i} = \begin{bmatrix}
  0 \\
  \phi_i
\end{bmatrix},\quad i = 1, \cdots ,2N_v,\]
which correspond to the first $4N_v$ DoFs. For convenience, we denote these functions by $\bb{\phi}_1, \bb{\phi}_2, \cdots, \bb{\phi}_{4N_v}$. The basis functions associated with the last two DoFs are then denoted by $\bb{\varphi}_1 = \bb{\phi}_{4N_v+1}, \bb{\varphi}_2 = \bb{\phi}_{4N_v+2}$.

We can write the basis of $\bb{V}_k(K)$ in a compact form as
\[\bb{\phi}^{\intercal} = (\bb{\phi}_1, \bb{\phi}_2, \cdots, \bb{\phi}_{N_k}),\]
where $N_k = 4N_v+2$ is the number of the DoFs of $\bb{V}_k(K)$.  The basis of $(\mathbb{P}_k(K))^2$ is then denoted as
\[\bb{m}^{\intercal} = (\bb{m}_1, \bb{m}_2, \cdots, \bb{m}_{N_p})= (\overline{m}_1, \cdots, \overline{m}_6, \underline{m}_1, \cdots, \underline{m}_6).\]
Since $(\mathbb{P}_k(K))^2 \subset V_k(K)$, one has $\bb{m}^{\intercal} = \bb{\phi}^{\intercal}\bb{D}$, where $\bb{D}$ is referred to as the transition matrix from $(\mathbb{P}_k(K))^2$ to $V_k(K)$. Let
\[\boldsymbol{m}^{\intercal} = [{\overline m_1},{\overline m_2},{\overline m_3},{\underline m_1},{\underline m_2},{\underline m_3}] = :[\overline m^{\intercal},\underline m^{\intercal}],\]
\[{\boldsymbol \phi^{\intercal}} = [{\overline \phi_1}, \cdots ,{\overline \phi_{{N}}},{\underline \phi_1}, \cdots ,{\underline \phi_{{N}}}, \bb{\varphi}_1, \bb{\varphi}_2] = :[\overline \phi^{\intercal} ,\underline \phi^{\intercal} , \bb{\varphi}_1, \bb{\varphi}_2].\]
Let $\bb{D} = \begin{bmatrix} \bb{D}_1 \\ \bb{D}_2 \end{bmatrix}$. One has
\begin{align*}
\bb{m} = [\overline m^{\intercal},\underline m^{\intercal}]
& = [\overline \phi^{\intercal} ,\underline \phi^{\intercal} , 0, 0] \bb{D} + [\bb{0}^{\intercal} ,\bb{0}^{\intercal}  , \bb{\varphi}_1, \bb{\varphi}_2] \bb{D} \\
& = [\overline \phi^{\intercal} ,\underline \phi^{\intercal} , 0, 0] \begin{bmatrix} \bb{D}_1 \\ \bb{O} \end{bmatrix}
   + [\bb{0}^{\intercal} ,\bb{0}^{\intercal}  , \bb{\varphi}_1, \bb{\varphi}_2] \begin{bmatrix} \bb{O} \\ \bb{D}_2 \end{bmatrix}.
\end{align*}
Let $m^{\intercal} = \phi^{\intercal}D$. We can find that
\[\bb{D}_1 = \begin{bmatrix} D & \\ & D \end{bmatrix}, \qquad
\bb{D}_2 =
\begin{bmatrix}
\chi_{4N_v+1}(\overline{m}^{\intercal}) & \chi_{4N_v+1}(\underline{m}^{\intercal})\\
\chi_{4N_v+2}(\overline{m}^{\intercal}) & \chi_{4N_v+2}(\underline{m}^{\intercal})
\end{bmatrix}.\]

The elliptic projection satisfies
\begin{equation}\label{ellipticNS}
\begin{cases}
\bb{G}\bb{\Pi}_*^K = \bb{B}, \\
P_0^K(\bb{m}^{\intercal})\bb{\Pi}_*^K = P_0^K(\bb{\phi}^{\intercal})
\end{cases} \quad \mbox{or} \quad
\tilde{\bb{G}}\bb{\Pi}_*^K = \tilde{\bb{B}},
\end{equation}
where
\[{\bb{G}} = a^K({\bb{m}}, {\bb{m}}^{\intercal}), \quad {\bb{B}} = a^K({\bb{m}}, \bb{\phi}^{\intercal}).\]
When $k=2$, one has
\begin{align*}
{\bb{B}}_{\alpha i}
= a^K({\bb{m}}_\alpha, \bb{\phi}_i) = \int_K q_\alpha{\rm div}\bb{\phi}_i {\rm d}x  +  \int_{\partial K} ( \nabla {\bb{m}}_\alpha \bb{n}-q_\alpha\bb{n}) \cdot \bb{\phi}_i{\rm d}s
=:  I_1(\alpha,i) +  I_2(\alpha,i),
\end{align*}
where
\[I_1(\alpha,i) = \int_K q_\alpha{\rm div}\bb{\phi}_i {\rm d}x, \quad I_2(\alpha,i) = \int_{\partial K} ( \nabla {\bb{m}}_\alpha \bb{n}-q_\alpha\bb{n}) \cdot \bb{\phi}_i{\rm d}s,\]
and
\[\Delta {\bb{m}}_\alpha = \nabla q_\alpha + 0, \quad q_\alpha \in \mathbb{P}_1(K)/\mathbb{P}_0(K).\]

For the first term $I_1$, by definition, let $q_\alpha = h_K(c_{2,\alpha} m_2 + c_{3,\alpha} m_3)$. Then
\[\begin{bmatrix}c_{2,\alpha} \\ c_{3,\alpha} \end{bmatrix}  \longleftarrow  \begin{bmatrix} \Delta m^{\intercal} & \bb{0}^{\intercal} \\ \bb{0}^{\intercal} & \Delta m^{\intercal}\end{bmatrix}.\]
The Kronecker's property gives
\begin{align*}
I_1(\alpha,i)
 =  h_K \Big(c_{2,\alpha}\int_K  m_2{\rm div}\bb{\phi}_i {\rm d}x+ c_{3,\alpha} \int_K m_3 {\rm div}\bb{\phi}_i {\rm d}x\Big)
 =  h_K \Big(c_{2,\alpha}\delta_{i, (4N_v+1)}+ c_{3,\alpha} \delta_{i, (4N_v+2)}\Big).
\end{align*}

For the second term $I_2$, let $(g_1^\alpha,g_2^\alpha)^{\intercal} = \nabla {\bb{m}}_\alpha \bb{n}-q_\alpha\bb{n}$, Then
\begin{equation}\label{I2ai}
I_2(\alpha,i)
 = \int_{\partial K} g_1^\alpha \bb{\phi}_{1,i} {\rm d}s + \int_{\partial K} g_2^\alpha \bb{\phi}_{2,i} {\rm d}s=:J_1(\alpha,i)+J_2(\alpha,i).
\end{equation}
Let $\phi$ be the basis of $\mathbb{B}_k(\partial K)$. One has
\[J_1(\alpha,:)^{\intercal} =
\begin{bmatrix}
( g_1^\alpha, \phi )_{\partial K} \\
( g_1^\alpha, \bb{0} )_{\partial K} \\
( g_1^\alpha, \bb{\varphi}_{1,1} )_{\partial K}\\
( g_1^\alpha, \bb{\varphi}_{2,1} )_{\partial K}
\end{bmatrix}
= \begin{bmatrix}
( g_1^\alpha, \phi )_{\partial K} \\
\bb{0} \\
0\\
0
\end{bmatrix}, \qquad J_2(\alpha,:)^{\intercal} =
\begin{bmatrix}
( g_2^\alpha, \bb{0} )_{\partial K} \\
( g_2^\alpha, \phi )_{\partial K} \\
( g_2^\alpha, \bb{\varphi}_{1,2} )_{\partial K}\\
( g_2^\alpha, \bb{\varphi}_{2,2} )_{\partial K}
\end{bmatrix}
= \begin{bmatrix}
\bb{0}\\
( g_2^\alpha, \phi )_{\partial K} \\
0\\
0
\end{bmatrix},
\]
where $( g_1^\alpha, \phi )_{\partial K}$ and $( g_2^\alpha, \phi )_{\partial K}$ can be computed using the  assembly technique for FEMs. For example, for $( g_1^\alpha, \phi )_{\partial K}$ there exist three integrals on $e$:
\[F_i = \int_e g_1^\alpha \phi_i {\rm d}s, \quad i = 1,2,3,\]
where $e=[a_e,m_e,b_e]$ is an edge with $a_e$ and $b_e$ being the endpoints and $m_e$ the midpoint. By the Simpson's formula,
\[F = \begin{bmatrix} F_1 \\ F_2 \\ F_3 \end{bmatrix}
    = \frac{h_e}{6}\begin{bmatrix} g_1^\alpha(a_e) \\ g_1^\alpha(b_e) \\ 4g_1^\alpha(m_e)\end{bmatrix}.\]

It remains to consider the implementation of the constraint. At first glance the $L^2$ projection is not computable since there is no zero-order moment on $K$. In fact, the computability can be obtained using the decomposition of polynomial spaces.
Let $\bb{\phi}_i = [\bb{\phi}_{1,i},\bb{\phi}_{1,i}]^{\intercal}$. Then
\[\bb{\phi}_{1,i} = \bb{\phi}_i \cdot \begin{bmatrix} 1 \\ 0 \end{bmatrix}, \quad
\bb{\phi}_{2,i} = \bb{\phi}_i \cdot \begin{bmatrix} 0 \\ 1 \end{bmatrix}.\]
It is easy to get
\[\begin{bmatrix} 1 \\ 0 \end{bmatrix}
= \nabla p_{k-1} + \bb{g}_{k-2}^\bot, \quad
p_{k-1} = h_K  m_2 , \quad \bb{g}_{k-2}^\bot = \bb{0},\]
\[\begin{bmatrix} 0 \\ 1 \end{bmatrix}
= \nabla q_{k-1} + \bb{g}_{k-2}^\bot, \quad
q_{k-1} = h_K  m_3 , \quad \bb{g}_{k-2}^\bot = \bb{0},\]
which yield
\begin{align}
& P_0^K(\bb{\phi}_{1,i})
 = |K|^{-1} \int_K \bb{\phi}_i \cdot \nabla p_{k-1} {\rm d}x
 =  |K|^{-1}h_K\Big(- \int_K {\rm div} \bb{\phi}_i  m_2 {\rm d}x + \int_{\partial K} m_2 \bb{\phi}_i \cdot \bb{n} {\rm d}s\Big), \nonumber\\
& P_0^K(\bb{\phi}_{2,i})
 = |K|^{-1} \int_K \bb{\phi}_i \cdot \nabla q_{k-1} {\rm d}x
 = |K|^{-1}h_K\Big(- \int_K {\rm div} \bb{\phi}_i  m_3 {\rm d}x + \int_{\partial K} m_3\bb{\phi}_i \cdot \bb{n} {\rm d}s\Big). \label{constraintStokes}
\end{align}
Their computation is similar to the previous one for $B_{\alpha, i}$ with $\alpha$ fixed. The resulting two row vectors will replace the first and seventh rows of $\bb{B}$.

 For the elliptic projection on the lifting space $\widetilde{\bb{V}}_k(K)$, we need to include several additional degrees of freedom (DoFs):
\begin{equation}\label{chigU}
\chi_j^{\bot}(\bb{v}) = \frac{1}{|K|} \int_K \bb{v} \cdot \bb{g}_j^{\bot} \, \mathrm{d}x, \quad \bb{g}_j^{\bot} \in \mathcal{G}_k^\oplus(K),
\end{equation}
where $j = 1, \dots, \text{dim} (\mathcal{G}_k^\oplus(K)) = \frac{(k+1)k}{2} = 3$ (for $k=2$), and $\bb{g}_j^{\bot}$ can be chosen as
\[
\bb{g}_j^{\bot} = \bb{m}^\bot m_j, \qquad \bb{m}^\bot =  ( m_3, -m_2) m_j.
\]
By placing these DoFs after $\chi_i$, we obtain a modified transition matrix, denoted by $\bb{D}_L$, where the first $N_k = 4N_v + 2$ rows of $\bb{D}_L$ are identical to those of $\bb{D}$, and the subsequent $n_k := \frac{(k+1)k}{2} = 3$ rows are computed by means of \eqref{chigU}. The linear system in \eqref{ellipticNS} is now replaced by $\bb{G}_L\bb{\Pi}_{L*}^K = \bb{B}_L$. The right-hand side of $\bb{B}_L = a^K(\bb{m}, \bb{\phi}^{\intercal}_L)$ has $n_k = 3$ more columns than $\bb{B}$. After integration by parts, it can be seen that the computation of $a^K(\bb{m}, \bb{\rho}^{\intercal})$ does not involve the extra degrees of freedom. This means $a^K(\bb{m}, \bb{\rho}^{\intercal}) = O$, i.e., $\bb{B}_L$ has $n_k = 3$ more columns of zero vectors than $\bb{B}$. The same applies to the constraints. With the transition matrix $\bb{D}_L$ and $\bb{B}_L$, the projection matrix $\bb\Pi_L^K$ in the lifting space $\widetilde{\bb{V}}_k(K)$ can be computed.

\subsection{Computation of the $L^2$ projection}

The $L^2$ projection is defined as
\[
\begin{cases}
  \Pi_k^0:  \bb{W}_h(K) \to (\mathbb{P}_k(K))^2, \quad \bb{v} \mapsto \Pi_k^0 \bb{v},  \\
  (\Pi_k^0 \bb{v}, \bb{q})_K = (\bb{v}, \bb{q})_K.
\end{cases}
\]
Let $\Pi_k^0$ have the matrix representations $\bb{\Pi}_k^0$ and $\bb{\Pi}_{k*}^0$ with respect to the basis $\bb{\phi}^{\intercal}$ and the polynomial basis $\bb{m}^{\intercal}$, respectively,
\begin{equation}\label{L2ProjNS}
\Pi_k^0 \bb{\phi}^{\intercal} = \bb{\phi}^{\intercal}\bb{\Pi}_k^0, \qquad \Pi_k^0 \bb{\phi}^{\intercal} = \bb{m}^{\intercal}\bb{\Pi}_{k*}^0,
\end{equation}
Then,
\[
\bb{\Pi}_k^0 = \bb{D} \bb{\Pi}_{k*}^0,
\]
and
\begin{equation}\label{L2NS}
(\bb{m}, \bb{m}^{\intercal})_K \bb{\Pi}_{k*}^0 = (\bb{m}, \bb{\phi}^{\intercal})_K \quad \text{or equivalently} \quad \bb{H} \bb{\Pi}_{k*}^0 = \bb{C}.
\end{equation}
It is clear that the consistency relation $\bb{H} = \bb{C} \bb{D}$ holds.

The integral on the right-hand side needs to be computed using the polynomial decomposition
\begin{equation}\label{Pkdecomp}
(\mathbb{P}_k(K))^2 = \mathcal{G}_k(K) \oplus \mathcal{G}_k^\oplus(K) = (\nabla \mathbb{P}_{k+1}(K)) \oplus \bb{m}^\bot \mathbb{P}_{k-1}(K),
\end{equation}
where $k=2$ and $\bb{\phi}_i$ is a basis function. To this end, we first need to compute the polynomial expansion of $\text{div} \bb{\phi}_i \in \mathbb{P}_{k-1}(K)$ and $\bb{\phi}_i|_e \in (\mathbb{P}_k(e))^2$.

For $\text{div} \bb{\phi}_i \in \mathbb{P}_{k-1}(K) = \mathbb{P}_1(K)$, we can set
\[
\text{div} \bb{\phi}_i = a_{1,i} m_1 + a_{2,i} m_2 + a_{3,i} m_3.
\]
By integrating both sides with respect to $m_i$, we get the system of equations
\[
H_1 \bb{a}_i = \bb{r}_i,
\]
where $(H_1)_{i,j} = (m_j, m_i)_K$ is a fixed $3 \times 3$ matrix, and
\[
\bb{a}_i = \begin{bmatrix} a_{1,i} \\ a_{2,i} \\ a_{3,i} \end{bmatrix}, \qquad
\bb{r}_i = \begin{bmatrix}
  (m_1, \text{div} \bb{\phi}_i)_K \\
  (m_2, \text{div} \bb{\phi}_i)_K \\
  (m_3, \text{div} \bb{\phi}_i)_K
\end{bmatrix}, \qquad i = 1, \dots, N_{dof} \equiv 4N_v + 2.
\]
Let
\[
\bb{a} = [\bb{a}_1, \dots, \bb{a}_{N_{dof}}], \qquad \bb{r} = [\bb{r}_1, \dots, \bb{r}_{N_{dof}}].
\]
We have
\[
H_1 \bb{a} = \bb{r}.
\]
It is easy to see that $\bb{r} = B_K^{\intercal}$.
Based on the definition of degrees of freedom,
\[
\bb{r}(2,:) = \frac{|K|}{h_K} \cdot [\bb{0}_{4N_v}, 1, 0], \qquad
\bb{r}(3,:) = \frac{|K|}{h_K} \cdot [\bb{0}_{4N_v}, 0, 1].
\]
The first row of $\bb{r}$ can be computed using the assembly technique for finite elements. In fact, by integration by parts, we have
\begin{align*}
  \bb{r}(1,i)
  &= \int_K \text{div} \bb{\phi}_i \d x = \int_{\partial K} \bb{\phi}_i \cdot \bb{n} \, ds \\
  &= \int_{\partial K} \bb{\phi}_{1,i}  n_x \, ds + \int_{\partial K} \bb{\phi}_{2,i}  n_y \, ds =: J_1(i) + J_2(i),
\end{align*}
which can be calculated as for \eqref{I2ai}, where $\bb{n} = [n_x, n_y]^{\intercal}$. Let $\phi$ represent the basis function corresponding to $\mathbb{B}_k(\partial K)$. According to the definition of DoFs, we have
\[
J_1 =
\begin{bmatrix}
( n_x, \phi )_{\partial K} \\
( n_x, \bb{0} )_{\partial K} \\
( n_x, \bb{\varphi}_{1,1} )_{\partial K} \\
( n_x, \bb{\varphi}_{2,1} )_{\partial K}
\end{bmatrix}
= \begin{bmatrix}
( n_x, \phi )_{\partial K} \\
\bb{0} \\
0 \\
0
\end{bmatrix}, \qquad J_2 =
\begin{bmatrix}
( n_y, \bb{0} )_{\partial K} \\
( n_y, \phi )_{\partial K} \\
( n_y, \bb{\varphi}_{1,2} )_{\partial K}\\
( n_y, \bb{\varphi}_{2,2} )_{\partial K}
\end{bmatrix}
= \begin{bmatrix}
\bb{0}\\
( n_y, \phi )_{\partial K} \\
0\\
0
\end{bmatrix},
\]

For $\bb{\phi}_i|_e \in (\mathbb{P}_k(e))^2 = (\mathbb{P}_2(e))^2$, define
\[
\bb{\phi}_i|_e = (\overline{a}_{1,i}^e \overline{m}_1^e + \overline{a}_{2,i}^e \overline{m}_2^e + \overline{a}_{3,i}^e \overline{m}_{3}^e) + (\underline{a}_{1,i}^e \underline{m}_1^e + \underline{a}_{2,i}^e \underline{m}_2^e + \underline{a}_{3,i}^e \underline{m}_{3}^e),
\]
where $\overline{m}^e = [m^e, 0]^{\intercal}$, $\underline{m}^e = [0, m^e]^{\intercal}$, and
\[
m_u^e = \left( \frac{s - s_e}{h_e} \right)^{u - 1}, \quad u = 1, 2, 3,
\]
where $s$ is the arc length parameter, and $s_e$ is the midpoint of edge $e$ (in terms of the arc length parameter). The expanded form can also be written as
\[
\bb{\phi}_{1,i}|_e = \overline{a}_{1,i}^e m_1^e + \overline{a}_{2,i}^e m_2^e + \overline{a}_{3,i}^e m_3^e,
\]
\[
\bb{\phi}_{2,i}|_e = \underline{a}_{1,i}^e m_1^e + \underline{a}_{2,i}^e m_2^e + \underline{a}_{3,i}^e m_3^e,
\]
where the coefficients are determined by the values at the vertices of the edge $a_e, b_e$ and the midpoint $s_e$. For $\bb{\phi}_{1,i}|_e$, we have
\[
\bb{\phi}_{1,i}|_e(a_e) = \overline{a}_{1,i}^e + \overline{a}_{2,i}^e \left( -\frac{1}{2} \right) + \overline{a}_{3,i}^e \left( -\frac{1}{2} \right)^2,
\]
\[
\bb{\phi}_{1,i}|_e(b_e) = \overline{a}_{1,i}^e + \overline{a}_{2,i}^e \left( \frac{1}{2} \right) + \overline{a}_{3,i}^e \left( \frac{1}{2} \right)^2,
\]
\[
\bb{\phi}_{1,i}|_e(s_e) = \overline{a}_{1,i}^e,
\]
i.e.,
\[
\begin{bmatrix}
1 & -\frac{1}{2} & \frac{1}{4} \\
1 & \frac{1}{2} & \frac{1}{4} \\
1 & 0 & 0
\end{bmatrix}
\begin{bmatrix}
\overline{a}_{1,i}^e \\
\overline{a}_{2,i}^e \\
\overline{a}_{3,i}^e
\end{bmatrix} =
\begin{bmatrix}
\bb{\phi}_{1,i}|_e(a_e) \\
\bb{\phi}_{1,i}|_e(b_e) \\
\bb{\phi}_{1,i}|_e(s_e)
\end{bmatrix}, \quad i = 1, \dots, N_{dof}.
\]
According to the definition of DoFs, only the following values are non-zero:
\[
\bb{\phi}_{1,i}(z_i) = 1, \quad \bb{\phi}_{1,N_v + i}(s_i) = 1, \quad \bb{\phi}_{2,2N_v + i}(z_i) = 1, \quad \bb{\phi}_{2,3N_v + i}(s_i) = 1, \quad i = 1, \dots, N_v.
\]
For a fixed edge $e = e_j = \overline{z_j z_{j+1}}$ ($a_e = z_j, b_e = z_{j+1}, s_e = s_j$), there is the matrix equation
\[
A \overline{\bb{a}}^e = \bb{\phi}_1^e =
\begin{bmatrix}
\bb{e}_j & \bb{0}_{N_v} & \bb{0}_{N_v} & \bb{0}_{N_v} & 0 & 0 \\
\bb{e}_{j+1} & \bb{0}_{N_v} & \bb{0}_{N_v} & \bb{0}_{N_v} & 0 & 0 \\
\bb{0}_{N_v} & \bb{e}_j & \bb{0}_{N_v} & \bb{0}_{N_v} & 0 & 0
\end{bmatrix}, \quad e = e_j,
\]
where the vector $\bb{e}_j = [0, \cdots, 1, \cdots, 0]$ . Similarly, for $\bb{\phi}_{2,i}|_e$, we have
\[
A \underline{\bb{a}}^e = \bb{\phi}_2^e =
\begin{bmatrix}
\bb{0}_{N_v} & \bb{0}_{N_v} & \bb{e}_j & \bb{0}_{N_v} & 0 & 0 \\
\bb{0}_{N_v} & \bb{0}_{N_v} & \bb{e}_{j+1} & \bb{0}_{N_v} & 0 & 0 \\
\bb{0}_{N_v} & \bb{0}_{N_v} & \bb{0}_{N_v} & \bb{e}_j & 0 & 0
\end{bmatrix}, \quad e = e_j.
\]
Clearly, $\underline{\bb{a}}^e$ is simply the column-wise exchange of $\overline{\bb{a}}^e$.

Now, let's calculate the right-hand side of \eqref{L2NS}, which is given by
\[
\bb{C}(\alpha,i) = (\bb{m}_\alpha, \bb{\phi}_i)_K,  \qquad \alpha=1,\cdots,2N_m=12, ~~i = 1,\cdots, N_{dof}.
\]
For $\bb{m}_{\alpha} \in (\mathbb{P}_k(K))^2$ and $k=2$, based on the decomposition in \eqref{Pkdecomp}, we can write
\[
\bb{m}_{\alpha} = (a_{2,\alpha}^\nabla \nabla  m_2 + \cdots + a_{10,\alpha}^\nabla \nabla  m_{10})  + (a_{1,\alpha}^\bot  \bb{m}^\bot  m_1 + \cdots + a_{3,\alpha}^\bot \bb{m}^\bot  m_3).
\]
The coefficients are obtained by taking the inner product with $\nabla m_2, \cdots, \nabla m_{10}, \bb{m}^\bot  m_1, \cdots, \bb{m}^\bot  m_3$. The corresponding system of equations is
\[
H^{\oplus} \bb{a}_{\alpha}^{\oplus} = \bb{r}_{\alpha}^{\oplus}, \qquad
\bb{a}_{\alpha}^{\oplus} = [a_{2,\alpha}^\nabla,\cdots, a_{10,\alpha}^\nabla, a_{1,\alpha}^\bot,\cdots, a_{3,\alpha}^\bot]^{\intercal}.
\]

Once these coefficients are obtained, the right-hand side becomes
\begin{align*}
\bb{C}(\alpha,i) = \int_K \bb{m}_{\alpha} \cdot \bb{\phi}_i \d x
& = \sum\limits_{s=2}^{10} a_{s,\alpha}^{\nabla} \int_K \nabla m_s \cdot \bb{\phi}_i \d x
    + \sum\limits_{t=1}^3 a_{t,\alpha}^\bot \int_K  \bb{m}^\bot m_t \cdot \bb{\phi}_i \d x\\
& =: \sum\limits_{s=2}^{10} a_{s,\alpha}^{\nabla} I^\nabla(s,i)
    + \sum\limits_{t=1}^3 a_{t,\alpha}^\bot  I^\bot(t,i).
\end{align*}
Note that $I^\nabla(s,i)$ and $I^\bot(t,i)$ are independent of the index $\alpha$ and can be computed in advance. The calculations for these two terms are relatively complicated, and the details are provided below.

For the first term on the right-hand side, by integration by parts, we have
\[
I^\nabla(s,i) = \int_K \nabla m_s \cdot \bb{\phi}_i \, \d x = - \int_K \text{div}\bb{\phi}_i m_s \, \d x + \int_{\partial K} ( \bb{\phi}_i \cdot \bb{n}) m_s \, \d s , \qquad s=2,\cdots,10.
\]
Previously, we obtained
\[
\text{div} \bb{\phi}_i  =  a_{1,i} m_1 + a_{2,i} m_2 + a_{3,i} m_3 ,
\]
\[
\bb{\phi}_i|_e  =  (\overline{a}_{1,i}^e \overline{m}_1^e + \overline{a}_{2,i}^e \overline{m}_2^e  + \overline{a}_{3,i}^e \overline{m}_{3}^e)  +
(\underline{a}_{1,i}^e \underline{m}_1^e + \underline{a}_{2,i}^e \underline{m}_2^e + \underline{a}_{3,i}^e \underline{m}_{3}^e).
\]
Thus,
\begin{align*}
  I^\nabla(s,i)
  & =  - \int_K (a_{1,i} m_1 + a_{2,i} m_2 + a_{3,i} m_3) m_s \d x  \\
  & \quad + \sum\limits_{e\subset\partial K} \int_e  m_s(\overline{a}_{1,i}^e m_1^e + \overline{a}_{2,i}^e m_2^e + \overline{a}_{3,i}^e m_{3}^e)  n_x^e \d s \\
  & \quad +  \sum\limits_{e\subset\partial K} \int_e  m_s(\underline{a}_{1,i}^e m_1^e + \underline{a}_{2,i}^e m_2^e + \underline{a}_{3,i}^e m_{3}^e)  n_y^e \d s \\
  & =: -[ a_{1,i} H_1(s,1) + a_{2,i} H_1(s,2) + a_{3,i} H_1(s,3)] \\
  & \quad + \sum\limits_{e\subset\partial K} [ \overline{a}_{1,i}^e H^e(s,1) + \overline{a}_{2,i}^e H^e(s,2) + \overline{a}_{3,i}^e H^e(s,3)  ]n_x^e \\
  & \quad + \sum\limits_{e\subset\partial K} [ \underline{a}_{1,i}^e H^e(s,1) + \underline{a}_{2,i}^e H^e(s,2) + \underline{a}_{3,i}^e H^e(s,3)  ]n_y^e \\
  & = - H_1(s,:) \bb{a}_i + \sum\limits_{e\subset\partial K} H^e(s,:) (\overline{\bb{a}}_i^e n_x^e + \underline{\bb{a}}_i^e n_y^e)
  \end{align*}
or
\[
I^\nabla(s,:) =  - H_1(s,:) \bb{a} + \sum\limits_{e\subset\partial K} H^e(s,:) (\overline{\bb{a}}^e n_x^e + \underline{\bb{a}}^e n_y^e),
\]
where
\[
H_1(s, j) = \int_K m_s m_j \, \d x, \quad  s =2,\cdots 10, ~~ j =1,2,3,
\]
\[
H^e(s,j) = \int_e m_s m_j^e \, \d s, \quad s =2,\cdots 10, ~~ j =1,2,3.
\]
The matrix $H_1$ has already been obtained in the expansion of $\text{div} \bb{\phi}_i$.
For the matrix $H^e$, since $m_s m_j^e|_e \in \mathbb{P}_5(e)$, we can compute the integral by the Gauss-Lobatto formula with four points,
\[
H^e(s,j) = \int_e m_s m_j^e \, \d s = h_e \sum\limits_{p=1}^4 w_p m_s(x_e^p,y_e^p) m_j^e(s_e^p),
\]
where $(x_e^p, y_e^p)$ is computed as
\[
(x_e^p, y_e^p) = (x_e^a, y_e^a) +  s_e^p (x_e^b - x_e^a, y_e^b - y_e^a),
\]
and $s_e^p$ is the ratio of the integration points.

For the second term on the right-hand side, when $k=2$,
\[
I^\bot(t,i) = \int_K \bb{m}^\bot m_t \cdot \bb{\phi}_i \, \d x \equiv ( \bb{\phi}_i, \bb{g}_t^\bot)_K
\]
which corresponds exactly to the extra degrees of freedom in \eqref{chigU}. When $k=2$, $\mathcal{G}_{k-2}^\oplus = \{0\}$, and according to the definition of the enhancement space in \eqref{VhNS},
\[
I^\bot(t,i) = ( \bb{\phi}_i, \bb{g}_t^\bot)_K =  ( \Pi^K \bb{\phi}_i, \bb{g}_t^\bot)_K = |K| \cdot \chi_t^{\bot}(\Pi^K \bb{\phi}_i).
\]
Let the nodal basis of the lifting space $\widetilde{\bb{V}}_k(K)$ be $\bb{\phi}_L^{\intercal} = [\bb{\phi}^{\intercal}, \bb{\rho}^{\intercal}]$,
where $\bb{\phi}^{\intercal}$ corresponds to the degrees of freedom of $\bb{W}_k(K)$, and $\bb{\rho}^{\intercal}$ corresponds to the extra degrees of freedom in \eqref{chigU}.
According to the definition of degrees of freedom, the $j$-th column of $\bb{\Pi}_L^K$ is the degree of freedom vector of $\Pi_L^K \bb{\phi}_{j,L}$. Therefore, the last $n_k$ rows of $\bb{\Pi}_L^K$ are
\[
\chi_t^{\bot}( \bb{\Pi}_L^K \bb{\phi}), \qquad t = 1, \cdots, n_k = 3.
\]
By removing the last $n_k$ columns on the right-hand side, we obtain
\[
\chi_t^{\bot}( \bb{\Pi}^K \bb{\phi}), \qquad t = 1, \cdots, n_k = 3.
\]
This leads to the expression for $I^\bot(t,i)$.

\subsection{Computation of the $\Pi_{k-1}^0\nabla$ projection}

Define
\[
\Pi_{k-1}^0\nabla: \bb{W}_k(K) \to (\mathbb{P}_{k-1}(K))^{2\times 2}, \quad \bb{v}_h \mapsto \Pi_{k-1}^0\nabla \bb{v}_h,
\]
which satisfies
\[
(\Pi_{k-1}^0\nabla \bb{v}_h, \bb{P})_K = (\nabla \bb{v}_h, \bb{P})_K, \qquad \bb{P} \in (\mathbb{P}_{k-1}(K))^{2\times 2}.
\]
Since $\bb{W}_k(K)$ is not a tensor product space, $\Pi_{k-1}^0\nabla$ cannot be interpreted in component form. The basis of $(\mathbb{P}_{k-1}(K))^{2\times 2}$ can be obtained from the scaled monomials for the scalar case:
\[
\bb{P} = \begin{bmatrix} m_i & m_j \\ m_k & m_l \end{bmatrix}, \qquad m_i, m_j, m_k, m_l \in \mathbb{M}_{k-1}(K).
\]
For $k = 2$, these basis functions are arranged as
\[
\bb{M}^{\intercal} = (\bb{M}_1, \cdots, \bb{M}_{12}) = (m_1^{11}, m_2^{11}, m_3^{11}, m_1^{12}, m_2^{12}, m_3^{12}, \cdots, m_1^{22}, m_2^{22}, m_3^{22}),
\]
where $m_i^{kl}$ represents the $2\times 2$ matrix with $m_i$ at the $(k,l)$ position (and other positions are 0). The projection definition can be written as
\[
(\bb{M}, \Pi_{k-1}^0\nabla \bb{\phi}^{\intercal} )_K = (\bb{M}, \nabla \bb{\phi}^{\intercal})_K.
\]
Let $\Pi_{k-1}^0\nabla \bb{\phi}^{\intercal}$ have the matrix $\bb{\Pi}_\nabla^*$ under the basis $\bb{M}^{\intercal}$, i.e.,
\begin{equation}\label{L2nabla}
\Pi_{k-1}^0\nabla \bb{\phi}^{\intercal} = \bb{M}^{\intercal} \bb{\Pi}_\nabla^*.
\end{equation}
Then we have
\[
\bb{H}_\nabla \bb{\Pi}_\nabla^* = \bb{C}_\nabla,
\]
where
\[
\bb{H}_\nabla = (\bb{M}, \bb{M}^{\intercal})_K, \qquad \bb{C}_\nabla = (\bb{M}, \nabla \bb{\phi}^{\intercal})_K.
\]
It is easy to see that
\[
\bb{H}_\nabla = \text{diag}(H_1, H_1, H_1, H_1), \qquad H_1 = ( ( m_i, m_j)_K) _{3\times 3}.
\]

For the right-hand side $\bb{C}_\nabla(\alpha, u) = (\bb{M}_\alpha, \nabla \bb{\phi}_u)_K$, let
\[
\nabla \bb{\phi}_u = \begin{bmatrix} \partial_x \bb{\phi}_{u,1} &  \partial_y \bb{\phi}_{u,1} \\ \partial_x \bb{\phi}_{u,2} &  \partial_y \bb{\phi}_{u,2} \end{bmatrix}
= \begin{bmatrix} \partial_1 \bb{\phi}_{u,1} &  \partial_2 \bb{\phi}_{u,1} \\ \partial_1 \bb{\phi}_{u,2} &  \partial_2 \bb{\phi}_{u,2} \end{bmatrix}, \qquad u = 1, \cdots, N_{dof},
\]
with $\alpha$ corresponding to $m_i^{kl}$. Then,
\begin{align*}
\bb{C}_\nabla(\alpha, u)
& = (m_i^{kl}, \nabla \bb{\phi}_u )_K =  \int_K m_i \partial_l \bb{\phi}_{u,k} \d x \\
& = \int_{\partial K} m_i \bb{\phi}_{u,k} n_l \, ds - \int_K \partial_l m_i \bb{\phi}_{u,k} \d x
=: I_1(\alpha, u)   + I_2(\alpha, u).
\end{align*}
\begin{itemize}
  \item For the first term on the right-hand side, since $m_i \bb{\phi}_{u,k}|_e \in \mathbb{P}_3(e)$, it can be computed using finite element assembly techniques. Let $g = m_i n_l$, and let $\phi$ denote the basis function corresponding to $\mathbb{B}_k(\partial K)$, then...
      \begin{itemize}
    \item When $k=1$,
    \[I_1(\alpha,:)^{\intercal} =
\begin{bmatrix}
( g, \phi )_{\partial K} \\
( g, \bb{0} )_{\partial K} \\
( g, \bb{\varphi}_{1,1} )_{\partial K}\\
( g, \bb{\varphi}_{2,1} )_{\partial K}
\end{bmatrix}
= \begin{bmatrix}
( g, \phi )_{\partial K} \\
\bb{0} \\
0\\
0
\end{bmatrix},
\]
    \item when $k=2$,
    \[I_1(\alpha,:)^{\intercal} =
\begin{bmatrix}
( g, \bb{0} )_{\partial K} \\
( g, \phi )_{\partial K} \\
( g, \bb{\varphi}_{1,2} )_{\partial K}\\
( g, \bb{\varphi}_{2,2} )_{\partial K}
\end{bmatrix}
= \begin{bmatrix}
\bb{0}\\
( g, \phi )_{\partial K} \\
0\\
0
\end{bmatrix},
\]
  \end{itemize}

  \item For the second term on the right-hand side, since $\partial_l m_i \in \mathbb{P}_0(K)$, we have
\[
I_2(\alpha, u) = - \partial_l m_i \int_K \bb{\phi}_{u,k} \d x.
\]
This is directly obtained from the calculation in the constraint of the elliptic projection. In fact, in Eq.~$\eqref{constraintStokes}$, we have already obtained
\[
P_0^K(\bb{\phi}^\intercal) = \frac{1}{|K|} \int_K \bb{\phi}^\intercal \d x,
\]
thus
\[
I_2(\alpha, :) = - \partial_l m_i \cdot |K| \cdot \big(\text{the } k\text{-th row of } P_0^K(\bb{\phi}^\intercal)\big).
\]
\end{itemize}

\subsection{Computation of the nonlinear term}

Let $\bb{\phi}^\intercal$ be the local basis function on element $K$.
\begin{itemize}
  \item Computation of $N_h^K(\bb{\phi}_j; \bb{u}_h^n, \bb{\phi}_i)$. Let
  \[
  (N_1^n)^K_{ij} = N_h(\bb{\phi}_j; \bb{u}_h^n, \bb{\phi}_i) = \int_K \left[ (\Pi_{k-1}^0 \nabla \bb{u}_h^n) \Pi_k^0 \bb{\phi}_j \right] \cdot \Pi_k^0 \bb{\phi}_i \d x,
  \]
  where $\bb{u}_h^n = \bb{\phi}^\intercal \bb{\chi}^n$. From $\eqref{L2nabla}$ and $\eqref{L2ProjNS}$, we get
  \[
  \Pi_{k-1}^0 \nabla \bb{u}_h^n = \Pi_{k-1}^0 \nabla \bb{\phi}^\intercal \bb{\chi}^n = \bb{M}^\intercal \bb{\Pi}_\nabla^* \bb{\chi}^n =: \bb{M}^\intercal \bb{a}^n = a_1^n \bb{M}_1 + \cdots + a_{12}^n \bb{M}_{12},
  \]
  \[
  \Pi_k^0 \bb{\phi}_j = \bb{m}^\intercal \bb{\Pi}_{k*}^0(:,j) =: b_{1,j} \bb{m}_1 + \cdots + b_{12,j} \bb{m}_{12}.
  \]
  Therefore,
  \[
  (N_1^n)^K_{ij} = \sum_{r,s,t=1}^{12} a_r^n b_{sj} b_{ti} \int_K [\bb{M}_r \bb{m}_s] \cdot \bb{m}_t \d x.
  \]

  \item Computation of $N_h^K(\bb{u}_h^n; \bb{\phi}_j, \bb{\phi}_i)$. For
  \[
  (N_2^n)^K_{ij} = N_h(\bb{u}_h^n; \bb{\phi}_j, \bb{\phi}_i) = \int_K \left[ (\Pi_{k-1}^0 \nabla \bb{\phi}_j) \Pi_k^0 \bb{u}_h^n \right] \cdot \Pi_k^0 \bb{\phi}_i \d x,
  \]
  from
  \[
  \Pi_{k-1}^0 \nabla \bb{\phi}_j = \bb{M}^\intercal \bb{\Pi}_\nabla^*(:,j) =: c_{1j} \bb{M}_1 + \cdots + c_{12,j} \bb{M}_{12},
  \]
  \[
  \Pi_k^0 \bb{u}_h^n = \Pi_k^0 \bb{\phi}^\intercal \bb{\chi}^n = \bb{m}^\intercal \bb{\Pi}_{k*}^0 \bb{\chi}^n := \bb{m}^\intercal \bb{d}^n = d_1^n \bb{m}_1 + \cdots + d_{12}^n \bb{m}_{12},
  \]
  we get
  \[
  (N_2^n)^K_{ij} = \sum_{r,s,t=1}^{12} c_{rj} d_s^n b_{ti} \int_K [ \bb{M}_r \bb{m}_s] \cdot \bb{m}_t \d x.
  \]

  \item Computation of $N_h^K(\bb{u}_h^n; \bb{u}_h^n, \bb{\phi}_j)$. For
  \[
  (N^n)^K_j = N_h(\bb{u}_h^n; \bb{u}_h^n, \bb{\phi}_j) = \int_K \left[ (\Pi_{k-1}^0 \nabla \bb{u}_h^n) \Pi_k^0 \bb{u}_h^n \right] \cdot \Pi_k^0 \bb{\phi}_j \d x,
  \]
  according to the previous notation, we have
  \[
  (N^n)^K_j = \sum_{r,s,t=1}^{12} a_r^n d_s^n b_{tj} \int_K [ \bb{M}_r \bb{m}_s] \cdot \bb{m}_t \d x.
  \]
\end{itemize}
It can be seen that all three terms require the calculation of the integrals of scaled polynomials:
\[
\int_K [ \bb{M}_r \bb{m}_s] \cdot \bb{m}_t \d x, \qquad r,s,t = 1, \cdots, 12.
\]

\bibliographystyle{unsrt}
\bibliography{Refs}

\end{document}